\numberwithin{equation}{section}
\newtheorem{thm}{Theorem}[section]
\newtheorem{lem}[thm]{Lemma}
\newtheorem{prop}[thm]{Proposition}
\newtheorem{cor}[thm]{Corollary}
\newtheorem{conj}[thm]{Conjecture}
\newcommand{\T}{\operatorname{T}}
\newcommand{\h}{\operatorname{ht}}
\newcommand{\R}{\mathbb R}
\newcommand{\N}{\mathbb N}
\newcommand{\SL}{\operatorname{SL}}
\newcommand{\Z}{ \mathbb{Z}}
\begin{document}
\title[Invariant measures on the space of lattices]{Positive entropy invariant measures on the space of lattices with escape of mass}
\author{Shirali Kadyrov}
\thanks{The author acknowledges support by the SNF
(200021-127145).}

\begin{abstract}
On the space of unimodular lattices, we construct a sequence of invariant probability measures under a singular diagonal element with high entropy and show that the limit measure is 0.
\end{abstract}
\maketitle
\section{Introduction}
Consider the homogeneous space $X_3=\SL_{3}(\Z)\backslash\SL_{3}(\R)$ with the transformation $\T_3$ acting as a right multiplication by $diag(e^{1/2},e^{1/2},e^{-1})$. In a joint work with M.~Einsiedler in \cite{EinKad} we prove the following.
\begin{thm}
\label{thm:previous}
For any sequence of $\T_3$-invariant probability measures $\mu_i$ on $X_3$ and $c \in [2,3]$ with $h_{\mu_i}(\T_3)\ge c$ one has that any weak$^*$ limit $\mu$ of $(\mu_i)$ has $\mu(X_3)\ge c-2$.
\end{thm}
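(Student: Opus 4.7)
The plan is to convert the entropy hypothesis into a quantitative lower bound for the mass of each $\mu_i$ in a compact subset of $X_3$, and then pass to the weak$^*$ limit. For the compact exhaustion I would take the Mahler sets $K_R = \{\Lambda \in X_3 : \lambda_1(\Lambda) \ge 1/R\}$, each of which is compact by Mahler's criterion. The maximal entropy of $\T_3$ on $X_3$ equals $3$, the sum of the two Lyapunov exponents (each equal to $\tfrac{3}{2}$) on the unstable horospherical subgroup $U^+ = \exp(\R E_{13}+\R E_{23})$. The technical heart of the argument is an estimate of the form
\[
h_{\mu_i}(\T_3) \;\le\; 2 + \mu_i(K_R) + \epsilon(R),
\]
with $\epsilon(R) \to 0$ as $R \to \infty$. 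Granting this, $h_{\mu_i}(\T_3) \ge c$ forces $\mu_i(K_R) \ge c - 2 - \epsilon(R)$ for every $i$; weak$^*$ upper semicontinuity on the closed set $K_R$ and $R \to \infty$ then yield $\mu(X_3) \ge c-2$.

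I would prove the entropy inequality by a Bowen-ball (equivalently, refined-partition) count. Split orbit segments of length $n$ according to the fraction of time spent in $K_R$ versus in the cusp $X_3\setminus K_R$, which by Birkhoff's theorem equals $\mu_i(K_R)$ and $1-\mu_i(K_R)$ on a full-measure set. Segments staying in $K_R$ can be covered by at most $\exp(3n+o(n))$ Bowen balls, the expected full-unstable count. Cusp-confined segments need only $\exp(2n+o(n))$ Bowen balls: on such a segment each lattice has a persistent short vector $v$, and the requirement that $v$ remain short along the orbit constrains the transverse motion in $U^+$, effectively dropping the unstable dimension by one. Combining the two counts via Katok's entropy formula (or Brin--Katok applied with the Birkhoff time-averages) then yields the displayed inequality.

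The main obstacle is making the cusp contribution of $\exp(2n+o(n))$ genuinely rigorous, with a quantitative error $\epsilon(R)$ that is uniform in the sequence $\mu_i$. The short vector $v(\Lambda)$ of a cusp lattice can have components in all three coordinates, and under $\T_3$ the $e_1,e_2$-components grow while the $e_3$-component shrinks, so $v$ may rotate between different ``types'' of cusp regions in finite time. A correct proof needs to decompose the cusp into sub-regions according to which coordinate of $v$ dominates, control how long an orbit can remain in each sub-region before a short vector of a different type emerges, and sum the Bowen-ball contributions over all admissible sequences of cusp excursions. This combinatorial bookkeeping, combined with a return-time estimate to $K_R$, is the source of the error term $\epsilon(R)$ and is the crux of the technical work; once it is in place, the weak$^*$ limit passage and the monotone limit in $R$ are routine.
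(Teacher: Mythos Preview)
This theorem is not proved in the present paper at all: it is quoted from the joint work with Einsiedler (reference \cite{EinKad}) as background, and the current paper goes in the opposite direction, constructing high-entropy sequences that \emph{do} lose mass. So there is no ``paper's own proof'' to compare against here.

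That said, your outline is broadly in the spirit of how such results are actually proved in \cite{EinKad} and in the Einsiedler--Lindenstrauss--Michel--Venkatesh circle of ideas: one bounds entropy by covering Bowen balls, and shows that trajectory segments spent high in the cusp contribute less than the full $3$ because the persistence of a short vector kills one unstable direction. Two cautions, though. First, your displayed inequality $h_{\mu_i}(\T_3)\le 2+\mu_i(K_R)+\epsilon(R)$ is exactly the statement $h\le 3\mu_i(K_R)+2(1-\mu_i(K_R))+\epsilon(R)$, and making the ``$2$'' in the cusp term rigorous is the entire content of the theorem; your sketch acknowledges this but does not supply it. The honest difficulty is not the rotation of the short vector between coordinate types (for this particular $a$ the short vector is essentially forced to have its dominant component in the $e_3$-direction while it stays short under forward iteration), but rather controlling the \emph{transitions}: each excursion into and out of the cusp costs a multiplicative constant in the Bowen-ball count, and one must show that the number of excursions in time $n$ is $o(n)$, or else absorb that cost into $\epsilon(R)$. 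Second, the unstable horospherical for right multiplication by $\operatorname{diag}(e^{1/2},e^{1/2},e^{-1})$ is spanned by $E_{31},E_{32}$, not $E_{13},E_{23}$; this is only a sign/convention slip but worth fixing.

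In short: reasonable strategy for the cited result, but there is nothing in this paper to compare it to.
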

This shows that a lower bound on the entropy of a sequence of measures controls escape of mass in any weak$^*$ limit.
We say that $\mu$ is a weak$^*$ limit of the sequence $(\mu_i)_{i\ge 1}$ if for some subsequence $i_k$ and for all
$f \in C_c(X)$ we have $$\lim_{k\to \infty}\int_X f d\mu_{i_k} \to \int_X f d \mu.$$
If $c < 2$ then the theorem does not tell us whether one should expect some positive mass left. In this paper we show that actually it is possible that if $c<2$ then the limit measure could be zero, and also show this in higher dimension.

For $d \ge 1$ we let $G=\SL_{d+1}(\R)$ and $\Gamma=\SL_{d+1}(\Z)$. We consider the homogeneous space $X=\Gamma\backslash G$ and a transformation $\T$ defined by
$$\T(x)=x a$$
where $a=diag(e^{1/d},e^{1/d},...,e^{1/d},e^{-1}) \in G.$ 
\begin{thm}
\label{thm:0 measure}
There exists a sequence of $\T$-invariant probability measures $(\mu_i)_{i \ge 1}$ on $X$ whose entropies satisfy $\lim_{i\to \infty} h_{\mu_i}(\T)=d$ but the weak$^*$ limit $\mu$ is the zero measure.
\end{thm}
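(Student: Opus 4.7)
The plan is to construct, for each large integer $N$, a $\T$-invariant probability measure $\mu_N$ concentrated on lattices having a vector of length $\le e^{-N}$, with $h_{\mu_N}(\T) \to d$ as $N \to \infty$. The weak-$*$ convergence $\mu_N \to 0$ is then immediate since any $f \in C_c(X)$ vanishes on the support of $\mu_N$ for $N$ large, so the content of the proof lies in the construction of $\mu_N$ and the entropy lower bound.

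I would first parameterize the relevant cusp. Matrices
$$g(\tilde A, b, t) = \begin{pmatrix} e^{t/d}\tilde A & 0 \\ b^T & e^{-t} \end{pmatrix},\qquad \tilde A \in \SL_d(\R),\ b \in \R^d,\ t \in \R,$$
describe lattices with distinguished short vector $v_t = e^{-t}e_{d+1}$, and one computes $\T g(\tilde A, b, t) = g(\tilde A, e^{1/d}b, t+1)$. Thus $\T$ simply increments the cusp depth by $+1$ and acts trivially on the transverse cusp slices $Y_t$ (closed finite-volume $\SL_d(\R) \ltimes \R^d$-orbits). The unstable manifold of $\T$ at each point is $d$-dimensional, generated by the root vectors $E_{i,d+1}$, with total expansion rate $d+1$, so the maximal $\T$-entropy is $d+1$, attained by Haar. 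The strategy is to build measures that capture a full $d$ units of entropy from the unstable horospherical direction while losing exactly one unit to the ``deterministic'' 1-dimensional cusp-depth factor.

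Concretely, I would code orbits that visit the deep cusp by a symbolic Markov system. The group $\SL_{d+1}(\Z)$ identifies cusps of the same depth in different ``directions'' so an orbit that enters a cusp of depth $\ge N$ in one direction eventually exits toward a cusp in another direction; cataloguing these transitions gives a countable-state shift $\Sigma_N$ whose alphabet growth at depth $\sim N$ is controlled by primitive lattice point counts in $\Z^{d+1}$. A suitable equilibrium/Gibbs state on $\Sigma_N$ then pushes forward to the sought $\T$-invariant probability $\mu_N$, concentrated at depth $\ge N - O(1)$ and of entropy close to $d$.

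The main obstacle is making this symbolic coding precise and computing the entropy of the equilibrium state: one must carefully select a cusp cross-section, verify an (approximately) Markov property in spite of noncompactness, and check that the equilibrium state's entropy tends to $d$ (and not, say, $d+1$ or less) using the precise lattice-point asymptotic combined with the per-step depth drift of $+1$. Once this is in place, the escape of mass $\mu_N \to 0$ is automatic.
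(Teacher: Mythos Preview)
Your proposal sketches a genuinely different route from the paper's. The paper never introduces a symbolic coding or an equilibrium state. Instead it (i) writes down by hand $\approx e^{dN}$ explicit lattices $x_t=\Gamma g_t$ whose $\T$-orbits stay above height $M$ on the time interval $[1,N]$ (the count $e^{dN}$ rather than $e^{(d+1)N}$ comes from an elementary Lebesgue-measure estimate on the set of parameters $t$ for which $\T^N(x_t)$ is not too high), (ii) uses a shadowing lemma together with mixing of $\T$ to concatenate arbitrarily many such length-$N$ segments, joined by short ``connectors'' of fixed length $N'$, producing $(mN+(m-1)N',s)$-separated sets of cardinality $\approx e^{dmN}$, and (iii) feeds these separated sets into the \emph{proof} of the Variational Principle (not just its statement) to extract a $\T$-invariant $\mu$ with $h_\mu(\T)>d-\epsilon$ and $\mu(X_{\ge M})>1-\epsilon$. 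Theorem~\ref{thm:0 measure} is then the one-line deduction: take $M=i$, $\epsilon=1/i$.

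Your outline has two concrete gaps. First, the countable-state Markov coding of cusp excursions for the singular diagonal action on $\SL_{d+1}(\Z)\backslash\SL_{d+1}(\R)$ with $d\ge 2$ is not something one can just invoke; constructing the cross-section, verifying an (approximate) Markov property, and showing that the Gibbs state has entropy tending to $d$ are precisely the steps you flag as ``the main obstacle'' and leave undone, so as written this is a heuristic rather than a proof. Note also an inaccuracy in your cusp picture: your own formula $\T g(\tilde A,b,t)=g(\tilde A,e^{1/d}b,t+1)$ shows that $\T$ does \emph{not} act trivially on the transverse slice; it expands the $b$-coordinate, and it is exactly this expansion that forces orbits to exit the cusp and that carries the branching you want to count. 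Second, your escape-of-mass claim is stated too strongly. The paper only obtains $\mu(X_{\ge M})>1-\epsilon$, not full support above height $M$: the concatenation step unavoidably introduces short connector segments during which the orbit may dip below $M$. Thus the assertion that ``any $f\in C_c(X)$ vanishes on the support of $\mu_N$'' presumes more than is actually established; the correct argument replaces this by $\mu_i(X_{\ge M_i})\to 1$ for some $M_i\to\infty$, which still forces the weak$^*$ limit to be zero.
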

We note here that the maximum measure theoretic entropy, the entropy of $\T$ with respect to Haar measure on $X$, is $d+1$. This follows for example from \cite[Prop. 9.2 and 9.6]{MarTom}. 
An immediate consequence  of Theorem~\ref{thm:0 measure} is the following corollary.
\begin{cor}
\label{cor:c measure}
For any $c \in [0,1]$ there exists a sequence of $\T$-invariant probability measures $(\nu_i)_{i\ge 1}$ on $X$ whose entropies satisfy $\lim_{\to \infty}h_{\mu_i}(\T)=d+c$ such that any weak$^*$ limit has mass $c$.
\end{cor}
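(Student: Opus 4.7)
The plan is to obtain the $\nu_i$ by mixing the escape-of-mass sequence of Theorem~\ref{thm:0 measure} with the Haar probability measure on $X$: the former supplies limit mass $0$ at entropy $d$, the latter supplies limit mass $1$ at entropy $d+1$, and the target pair $(\text{entropy},\text{mass})=(d+c,c)$ lies on the straight segment joining $(d,0)$ to $(d+1,1)$ whenever $c\in[0,1]$. So the corollary should follow by a single interpolation with no further dynamical input.

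Concretely, I would fix $c\in[0,1]$ and let $(\mu_i)_{i\ge 1}$ be the sequence furnished by Theorem~\ref{thm:0 measure}, so that $h_{\mu_i}(\T)\to d$ and $\mu_i\to 0$ in the weak$^*$ topology. Since $\T$ is right translation by an element of $G$, the Haar probability measure $m_X$ on $X$ is $\T$-invariant, and the paper already records $h_{m_X}(\T)=d+1$. I would then define
$$\nu_i \;=\; c\,m_X + (1-c)\,\mu_i,$$
which is a $\T$-invariant probability measure on $X$. Testing against $f\in C_c(X)$ gives $\int f\,d\nu_i = c\int f\,dm_X + (1-c)\int f\,d\mu_i \to c\int f\,dm_X$, so any weak$^*$ limit of $(\nu_i)$ equals $c\,m_X$, a measure of total mass exactly $c$. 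For the entropy I would invoke the affinity of the metric entropy map on the simplex of $\T$-invariant probability measures to get
$$h_{\nu_i}(\T) \;=\; c\,h_{m_X}(\T) + (1-c)\,h_{\mu_i}(\T) \;=\; c(d+1) + (1-c)\,h_{\mu_i}(\T) \;\longrightarrow\; d+c.$$

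The only substantive ingredient beyond Theorem~\ref{thm:0 measure} is the affinity of metric entropy under convex combinations of invariant probability measures, which is classical, so there is no real obstacle. A minor sanity point is the $\T$-invariance of $m_X$, which is immediate from the fact that $\T$ acts by right translation on $\Gamma\backslash G$.
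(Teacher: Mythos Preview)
Your proof is correct and is essentially identical to the paper's own argument: the paper also sets $\nu_i = c\lambda + (1-c)\mu_i$ with $\lambda$ the Haar measure, uses $h_\lambda(\T)=d+1$ together with affinity of entropy to get $h_{\nu_i}(\T)\to d+c$, and observes that $\nu_i\to c\lambda$ so the limit has mass $c$.
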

Theorem~\ref{thm:previous} and Corollary~\ref{cor:c measure} suggest the following.
\begin{conj}
Let $\T$ and $X$ be as above with $d \ge 3$ and let $c \in [d,d+1]$. Then for $\T$-invariant probability measures $\mu_i$ on $X$ with $h_{\mu_i}(\T)\ge c$ one has that any weak$^*$ limit $\mu$ of $(\mu_i)_{i \ge 1}$ has $\mu(X)\ge c-d$.
\end{conj}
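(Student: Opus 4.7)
The plan is to extend the argument of Theorem~\ref{thm:previous} (the $d=2$ case) to general $d \ge 3$. Fix $\delta > 0$ and set $K_\delta = \{x \in X : \text{every nonzero vector of the lattice } x \text{ has length} \ge \delta\}$; by Mahler's criterion, $K_\delta$ is compact, and its complement $X \setminus K_\delta$ is the cuspidal thin part. The strategy is to establish, for every $\T$-invariant probability measure $\mu$ on $X$ and every sufficiently small $\delta > 0$, the sharp entropy bound
\[
h_\mu(\T) \le d + \mu(K_\delta),
\]
which is the natural higher-rank analogue of what is proved in \cite{EinKad}. Granting this, the hypothesis $h_{\mu_i}(\T) \ge c$ forces $\mu_i(K_\delta) \ge c - d$, and the Portmanteau upper semi-continuity of mass on closed sets under weak$^*$ convergence (valid for sub-probability limits in the locally compact setting) yields $\mu(K_\delta) \ge c - d$ for any weak$^*$ limit $\mu$; hence $\mu(X) \ge c - d$, as desired.

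To prove the entropy bound, I would follow the Margulis-function strategy of \cite{EinKad}. The conceptual picture is that in the cusp $\T$ deterministically contracts the shortest lattice vector by the factor $e^{-1}$ per iterate; this one-dimensional drift consumes exactly one unit of the $d+1$ entropy budget of the Haar measure (whose entropy is $d+1$), leaving only $d$ units available for genuine entropy production. Formally, one constructs a proper function $f \colon X \to [1, \infty)$ whose level sets track the cuspidal depth and which satisfies a $\T$-subinvariance inequality
\[
\int f \circ \T \, d\nu \le \theta \int f \, d\nu + C
\]
for some $\theta < 1$, $C > 0$ and every $\T$-invariant probability $\nu$. Such an $f$ controls the Bowen ball count in the cuspidal region, and when combined with a suitable cusp-adapted partition argument it yields a cuspidal entropy density of $d$ rather than $d+1$, which is precisely the estimate needed.

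The main obstacle in passing from $d = 2$ to $d \ge 3$ is the richer cuspidal stratification. For $d = 2$, the cusp of $X$ is essentially parameterized by a single short vector, and a Margulis function of the form $f_\alpha(x) = \sum_{v \in x \setminus \{0\}} \max(\|v\|, 1)^{-\alpha}$ with $\alpha$ slightly below $d+1$ suffices. For $d \ge 3$, the cusp has non-trivial strata indexed by $k \in \{1, \ldots, d\}$ corresponding to short $k$-dimensional sublattices of $x$; each stratum has its own reduced $\T$-dynamics and a priori contributes its own portion of the entropy. One expects the correct Margulis function to be a sum $\sum_{k=1}^{d} f^{(k)}_{\alpha_k}$ with exponents $\alpha_k$ carefully matched so that the contraction $\theta < 1$ holds uniformly across all strata simultaneously. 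Constructing such a family and deducing from it the sharp cuspidal entropy density $d$, rather than some weaker stratum-by-stratum bound, is the principal technical difficulty and the reason the conjecture remains open for $d \ge 3$.
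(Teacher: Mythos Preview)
The statement you are attempting to prove is a \emph{conjecture} in the paper, not a theorem: the paper gives no proof and explicitly leaves it open, referring to \cite{Che} for a related formulation. So there is no ``paper's own proof'' against which to compare your attempt.

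Your proposal is not a proof either, and you say so yourself in the final paragraph. The scheme you outline---establish $h_\mu(\T)\le d+\mu(K_\delta)$ for every $\T$-invariant probability $\mu$ and then pass to weak$^*$ limits via Portmanteau---is a reasonable \emph{program}, and indeed is the natural extrapolation of the $d=2$ argument in \cite{EinKad}. But the crux is precisely the entropy inequality, and for that you only offer a heuristic (``one unit of the entropy budget is consumed by the cuspidal drift'') together with the observation that the cusp for $d\ge 3$ has a stratification by short sublattices of all intermediate ranks. You correctly identify that a single Margulis function on short vectors no longer suffices and that one would need a system of height functions on $k$-dimensional sublattices with tuned exponents; however, you do not construct such a system, do not verify the contraction inequality on any stratum, and do not show how the stratum bounds combine to give the sharp constant $d$ rather than something weaker. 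In short, the proposal locates the difficulty accurately but does not resolve it; it should be presented as a strategy toward the conjecture, not as a proof.
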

For more general conjecture of the similar spirit we refer to \cite{Che}. There, it is stated in terms of the Hausdorff dimension of the set of points that lie on divergent trajectories for the non-quasi-unipotent flow.

Let $M>0$ be given. For a lattice $x \in X$, define the height $\h(x)$ to be the inverse of the length of the shortest nonzero vector in $x$. Also, define the sets 
$$X_{<M}=\{x \in X : \h(x) <M\} \text{ and } X_{\ge M}=\{x \in X : \h(x) \ge M\}.$$
We note that by Mahler's compactness criterion $X_{< M}$ is pre-compact.
Theorem~\ref{thm:0 measure} follows from the following.
\begin{thm}
\label{thm:measure}
For any $\epsilon>0$ and $M \ge 1$ there exists a $\T$-invariant measure $\mu$ with $h_{\mu}(\T)>d-\epsilon$ such that $\mu(X_{\geq M})>1-\epsilon$.
\end{thm}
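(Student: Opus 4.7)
The plan is to construct $\mu$ as the Kakutani tower lift of a carefully chosen return-map invariant measure on a compact cross-section just below the cusp. Fix a parameter $M' < M$ and let $A = \{x\in X : M'/2 \le \h(x) \le M'\}$, a compact collar. Define the return map $T^r \colon A \to A$ with return time $r(x) = \min\{n\ge 1 : \T^n x \in A\}$; each excursion from $A$ spends time $r(x)-O(1)$ inside $X_{\ge M}$, since once a lattice acquires a short vector pointing near $e_{d+1}$ the dynamics $a = \operatorname{diag}(e^{1/d},\ldots,e^{1/d},e^{-1})$ contracts that vector by $e^{-1}$ each step and so keeps the orbit inside the cusp for an interval proportional to the excursion depth.

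The first step is to build a $T^r$-invariant probability measure $\nu$ on $A$ satisfying
(i) $\int r\, d\nu \ge 2/\epsilon$, and
(ii) $h_\nu(T^r) \ge (d-\epsilon/2)\int r\, d\nu$.
The natural candidate is a Bernoulli-type measure on a symbolic coding of excursions: excursions of depth $\sim k$ are labeled by an element of $\{1,\ldots,\lfloor k^{1/d}\rfloor\}^d$ encoding the $d$-dimensional unstable shear coming from the horospherical $U^u=\exp(\mathrm{span}\{E_{d+1,j}:j\le d\})$, and the weights are concentrated on large $k$ so that each depth-$k$ excursion carries entropy $\sim d\log k$ while costing time $\sim \log k$, yielding an entropy-per-unit-time close to $d$.

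The second step is the tower lift
\[
\mu \;=\; \frac{1}{\int r\, d\nu}\, \sum_{k=0}^{\infty} \T^k_\ast\bigl(\nu\vert_{\{r>k\}}\bigr),
\]
a $\T$-invariant probability measure on $X$. Abramov's formula then gives
$h_\mu(\T) = h_\nu(T^r)/\int r\, d\nu \ge d-\epsilon/2$, and since $\mu(A) = 1/\int r\, d\nu$ and the buffer region $X_{<M}\setminus A$ is similarly negligible in $\mu$, we conclude $\mu(X_{\ge M}) > 1-\epsilon$.

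The main obstacle is Step~1: one must show that each depth-$k$ excursion can be made to contribute the full $\sim d\log k$ bits of entropy coming from the $d$-dimensional unstable horospherical $U^u$, rather than only the $\log k$ bits that come from the 1-dimensional height coordinate. For $d=1$ this reduces to the classical continued-fraction coding of $\SL_2(\Z)\backslash\SL_2(\R)$-geodesics together with a Bernoulli measure on partial quotients of polynomial tail; for general $d$ one places a Haar-type measure along unstable leaves inside $A$, restricts to trajectories whose next return is deep, and verifies (ii) via an entropy formula of Ledrappier--Young / Margulis--Ruelle type applied to the $U^u$-foliation. The clean ``loss of exactly one unit of entropy'' relative to the maximal value $d+1$ of Haar measure comes from the deterministic behavior of the height coordinate within each cusp excursion, and making this precise is the technical heart of the argument.
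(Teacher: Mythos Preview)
Your approach is genuinely different from the paper's, and it is a reasonable strategy, but as written it is a plan with the decisive step left open rather than a proof. The paper does \emph{not} use inducing, towers, or Abramov's formula. Instead it constructs, for each large $N$, an explicit family of $K\asymp e^{dN}$ lattices $x_t$ (parametrized by the $d$-dimensional unstable horospherical) whose orbits stay above height $M$ on $[1,N]$; it then concatenates such excursions using mixing (Lemma~\ref{lem:join}) and a shadowing lemma, producing for every $m$ an $(mN+(m-1)N',s)$-separated set of size $K^m$ whose orbits spend at least a $\frac{N}{N+N'}$-fraction of the time in $X_{\ge M/(c_0+1)}$. The measure $\mu$ is then obtained by the standard variational-principle construction (orbit averages of atomic measures, weak$^*$ limit), and both the entropy and the mass bounds are read off directly. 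Your tower approach would, if completed, package the same information differently: the paper's explicit $e^{dN}$-count of excursions is exactly your Step~1(ii), and the paper's ``connecting time'' $N'$ plays the role your sketch suppresses.

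There are two concrete gaps. First, the crucial assertion---that cusp excursions of time $N$ form a family of cardinality $\asymp e^{dN}$ separated along $U^+$---is precisely what you flag as ``the technical heart'' and do not prove; this is the content of the paper's Lemma~4.1 and Proposition~\ref{prop:sep}, and without it your inequality (ii) has no support. (Your own bookkeeping is also inconsistent: a depth-$k$ excursion with time cost $\log k$ should admit $\asymp k^d$ labels, i.e.\ $\{1,\dots,k\}^d$, giving entropy $d\log k$; your labeling by $\{1,\dots,\lfloor k^{1/d}\rfloor\}^d$ yields only $k$ labels and entropy $\log k$, which would give entropy rate $1$, not $d$.) Second, the claim that ``each excursion from $A$ spends time $r(x)-O(1)$ inside $X_{\ge M}$'' is false as stated: an orbit leaving the collar $A$ can go \emph{down} into the compact part as well as up into the cusp, so roughly half the returns to $A$ contribute no cusp time at all. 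You need a mechanism to force successive excursions back into the cusp with controlled transition cost; in the paper this is exactly the role of Lemma~\ref{lem:join} together with the shadowing Lemma~\ref{lem:shadow}, and any inducing argument will need an analogous ingredient before the Abramov computation can go through.
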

We will construct infinitely many points in $X_{<M}$ whose forward trajectories mostly stay above height $M$. Taking union of the sets of forward trajectories of these points, we will construct a $\T$-invariant set $S_N$ with topological entropy greater than $d-\epsilon$ (cf. Theorem~\ref{thm:main}). To construct the $\T$-invariant probability measures we want, we will make use of the Variational Principle. In the next section, we introduce preliminary definitions and deduce  Theorem~\ref{thm:0 measure} and its corollary assuming Theorem~\ref{thm:measure}. In \S~\ref{sec:thm:measure} we prove Theorem~\ref{thm:measure} assuming Theorem~\ref{thm:main}. In the last two sections we prove Theroem~\ref{thm:main}.

{\bf Acknowledgments:} This work is part of the author's doctoral dissertation at The Ohio State University. The author would like to thank his adviser M. Einsiedler for encouragement and useful conversations.
\section{Preliminaries}
\subsection{Topological Entropy and Variational Principle}
In this section we will briefly introduce topological entropy and its relation to measure theoretic entropy which is called the Variational Principle. For details and proofs we refer to Chapter 7 and Chapter 8 of \cite{WB}.

There are various definitions of topological entropy. Here, we will give the definition of topological entropy in terms of separated sets. Let $(Y,d_0)$ be a compact metric space and let $\T:Y \to Y$ be a continuous map. Define a new metric $d_n$ on $Y$ by
$$d_n(x,y)=\max_{0\leq i \leq n-1}d_0(\T^i(x),\T^i(y)).$$
For a given $\epsilon>0$ and a natural number $n$, we say that the couple $x,y$ is $(n,\epsilon)$\textit{-separated} if $d_n(x,y) \ge \epsilon$ and we say that the set $E$ is $(n,\epsilon)$\textit{-separated} if any distinct $x,y \in E$ is $(n,\epsilon)$-separated.

Now define $s_n(\epsilon,Y)$ to be the cardinality of the largest possible $(n,\epsilon)$-separated set and let
$$s(\epsilon,Y)=\limsup _{n \to \infty}\frac{1}{n}\log s_n(\epsilon,Y).$$
Finally, we define the \textit{topological entropy} of $\T$ with respect to $Y$ by
$$h(\T)=\lim_{\epsilon \to 0}s(\epsilon,Y).$$
Here is the relation between the topological entropy and measure theoretic entropy:
\begin{thm}[Variational Principle]
\label{thm:var}
Topological entropy $h_{\T}(Y)$ of a $\T$-invariant compact metric space $Y$ is the supremum of measure theoretic entropies $h_{\mu}(Y)$ where supremum is taken over all $\T$-invariant probability measures on the set $Y$.
\end{thm}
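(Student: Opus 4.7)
The plan is to prove the two inequalities separately: (a) $h_\mu(\T)\le h(\T)$ for every $\T$-invariant Borel probability measure $\mu$ on $Y$, and (b) $h(\T)\le \sup_\mu h_\mu(\T)$, where the supremum is over $\T$-invariant probability measures.

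For direction (a), I would start with any $\T$-invariant $\mu$ and a finite Borel partition $\xi=\{A_1,\dots,A_k\}$. Regularize it: choose compact $B_j\subset A_j$ with $\mu(A_j\setminus B_j)$ arbitrarily small, yielding a refinement $\eta=\{B_0,B_1,\dots,B_k\}$ (where $B_0=Y\setminus\bigcup_{j\ge 1} B_j$) whose ``error'' atom $B_0$ has negligible entropy contribution. Let $\epsilon>0$ be less than the minimum pairwise distance among the $B_j$. For an $(n,\epsilon)$-spanning set $F_n$ of minimum cardinality $r_n(\epsilon)$, each nonempty atom of $\bigvee_{i=0}^{n-1}\T^{-i}\eta$ can meet the $(\epsilon/2)$-ball of only a bounded number of points in $F_n$ (uniformly in $n$), so the number of such atoms is at most $C\, r_n(\epsilon)$ for some constant $C$ depending on $k$ alone. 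Combined with $H_\mu(\cdot)\le \log(\text{number of atoms})$ and the standard comparison $r_n(\epsilon)\le s_n(\epsilon,Y)\le r_n(\epsilon/2)$, dividing by $n$ and passing to the limit gives $h_\mu(\T,\xi)\le s(\epsilon/2,Y)$; letting $\epsilon\to 0$ and taking the sup over $\xi$ completes (a).

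For direction (b), fix $\epsilon>0$ and, for each $n$, pick a maximal $(n,\epsilon)$-separated set $E_n$ with $|E_n|=s_n(\epsilon,Y)$. Form the atomic measure $\sigma_n=\frac{1}{|E_n|}\sum_{x\in E_n}\delta_x$ and its time-average $\mu_n=\frac{1}{n}\sum_{i=0}^{n-1}\T^i_*\sigma_n$. By compactness of the space of probability measures on $Y$ one can extract a weak-$^*$ convergent subsequence $\mu_{n_k}\to \mu$; a standard argument shows $\mu$ is $\T$-invariant. Now choose a finite Borel partition $\xi$ of diameter less than $\epsilon$ whose atoms have $\mu$-null boundary, so that $H_{\mu_{n_k}}(\cdot)$ behaves continuously in the limit. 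By $(n,\epsilon)$-separation, each atom of $\bigvee_{i=0}^{n-1}\T^{-i}\xi$ contains at most one point of $E_n$, which gives the key identity $H_{\sigma_n}\!\bigl(\bigvee_{i=0}^{n-1}\T^{-i}\xi\bigr)=\log s_n(\epsilon,Y)$.

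The main obstacle is the last step: converting that identity for $\sigma_n$ into a lower bound for $h_\mu(\T,\xi)$. This requires the Misiurewicz averaging trick: for a fixed block length $q$, decompose $\bigvee_{i=0}^{n-1}\T^{-i}\xi$ into $\lfloor n/q\rfloor$ shifted blocks of length $q$, apply subadditivity of Shannon entropy together with concavity under averaging of the measures $\T^i_*\sigma_n$, and absorb the $O(q\log k)$ boundary error. The resulting inequality is $\frac{1}{n}\log s_n(\epsilon,Y)\le \frac{1}{q} H_{\mu_n}\!\bigl(\bigvee_{j=0}^{q-1}\T^{-j}\xi\bigr)+O\!\left(\frac{q\log k}{n}\right)$. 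Letting $n\to\infty$ along $(n_k)$ uses the weak-$^*$ convergence and the $\mu$-null-boundary property of $\xi$ to yield $s(\epsilon,Y)\le \frac{1}{q}H_\mu\!\bigl(\bigvee_{j=0}^{q-1}\T^{-j}\xi\bigr)$, and then $q\to\infty$ gives $s(\epsilon,Y)\le h_\mu(\T,\xi)\le h_\mu(\T)\le \sup_\nu h_\nu(\T)$. Finally $\epsilon\to 0$ yields (b), completing the proof.
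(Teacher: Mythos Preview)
The paper does not prove the Variational Principle; it merely states it and refers the reader to \cite{WB} (Walters, Chapters 7--8) for details. So there is no ``paper's own proof'' to compare against. Your proposal is essentially the standard Misiurewicz argument that appears in Walters, and the paper later uses exactly your construction from direction (b) (atomic measures on maximal separated sets, time-averaged, weak$^*$ limit) in its proof of Theorem~\ref{thm:measure}. In that sense your outline matches both the cited reference and how the paper applies it.

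Direction (b) is correctly sketched, including the key Misiurewicz block-averaging step.

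Direction (a), however, has a genuine gap. The claim that ``each nonempty atom of $\bigvee_{i=0}^{n-1}\T^{-i}\eta$ can meet the $(\epsilon/2)$-ball of only a bounded number of points in $F_n$, so the number of such atoms is at most $C\, r_n(\epsilon)$'' is not correct as stated. The relevant counting goes the other way: a single $d_n$-ball of radius $\epsilon$ (with $\epsilon$ less than the minimum pairwise distance among $B_1,\dots,B_k$) can meet, at each time $i$, at most one of $B_1,\dots,B_k$ \emph{and possibly $B_0$}. Hence one $d_n$-ball can intersect up to $2^n$ atoms of $\bigvee_{i=0}^{n-1}\T^{-i}\eta$, not a number bounded independently of $n$, and the naive bound on the number of atoms is $2^n r_n(\epsilon)$, which is useless. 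The standard repair is either to pass to the open cover $\alpha=\{B_0\cup B_1,\dots,B_0\cup B_k\}$, for which each $d_n$-ball lies in a single member of $\bigvee_{i=0}^{n-1}\T^{-i}\alpha$, giving $N(\bigvee_{i=0}^{n-1}\T^{-i}\alpha)\le r_n(\epsilon)$ and hence $h_\mu(\T,\eta)\le h(\T,\alpha)+\log 2\le h(\T)+\log 2$; or to control the extra $n\log 2$ by the small measure of $B_0$ via the conditional-entropy inequality $h_\mu(\T,\xi)\le h_\mu(\T,\eta)+H_\mu(\xi\mid\eta)$. Either way, one finishes by noting that $h_\mu(\T,\xi)\le h(\T)+\log 2$ for every $\xi$ forces $h_\mu(\T)\le h(\T)$ (replace $\T$ by $\T^n$ and divide by $n$). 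Your sketch skips this subtlety.
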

\subsection{Riemannian metric on $X$}
\label{inj}
Let $G=\SL_{d+1}(\R)$ and $\Gamma=\SL_{d+1}(\Z)$. We fix a left-invariant Riemannian metric $d_G$ on $G$ and for any $x_1=\Gamma g_1,x_2=\Gamma g_2 \in X$ we define
$$d_X(x_1,x_2)=\inf_{\gamma \in \Gamma} d_{G}(g_1,\gamma g_2)$$
which gives a metric $d_X$ on $X=\Gamma \backslash G.$  For more information about the Riemannian metric, we refer \cite[Chp. 2]{RM}.
\subsubsection{Injectivity radius}
Let $B_r^H(x):=\{h \in H\,|\,d(h,x)<r\}$ where $d$ is a metric defined in $H$ and $B_r^H$ is understood to be $B_r^H(1)$.
\begin{lem}
For any $x\in X$ there is an injectivity radius $r>0$ such that the map $g \mapsto xg$ from $B_r^G \to B_r^X(x)$ is an isometry.
\end{lem}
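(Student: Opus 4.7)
Fix $x = \Gamma g_0 \in X$ and set $\Gamma_0 = g_0^{-1}\Gamma g_0$, which is a discrete subgroup of $G$ since conjugation is a homeomorphism. Two elements $g, g' \in G$ satisfy $\Gamma g_0 g = \Gamma g_0 g'$ precisely when $g' g^{-1} \in \Gamma_0$, so the plan is to shrink $r$ until the only element of $\Gamma_0$ that can appear near $1$ is the identity, and then use the left-invariance of $d_G$ to upgrade this into an honest isometric property.

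By discreteness there is an open neighborhood $V$ of $1$ with $V \cap \Gamma_0 = \{1\}$. Continuity of multiplication and inversion lets me pick $r > 0$ so that $B_{4r}^G \subset V$ and $B_{2r}^G \cdot (B_{2r}^G)^{-1} \subset V$. Since $\Gamma_0 = \Gamma_0^{-1}$, every non-identity $\gamma' \in \Gamma_0$ then satisfies both $d_G(\gamma',1) \geq 4r$ and $d_G(\gamma'^{-1},1) \geq 4r$. Injectivity of $g \mapsto xg$ on $B_r^G$ is then immediate: if $g,g' \in B_r^G$ produce the same image, then $g'g^{-1} \in V \cap \Gamma_0 = \{1\}$.

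For the isometric statement, left-invariance rewrites
\[
d_X(xg, xg') = \inf_{\gamma' \in \Gamma_0} d_G(\gamma'^{-1}g,\, g').
\]
The $\gamma'=1$ term equals $d_G(g,g') < 2r$. For $\gamma' \neq 1$, two applications of the triangle inequality together with the left-invariant identity $d_G(\gamma'^{-1}, \gamma'^{-1}g) = d_G(1,g)$ give
\[
d_G(\gamma'^{-1}g, g') \geq d_G(\gamma'^{-1},1) - d_G(1,g) - d_G(1,g') > 4r - r - r = 2r,
\]
so the infimum is actually attained at $\gamma'=1$ and equals $d_G(g,g')$. Surjectivity onto $B_r^X(x)$ is routine from the definition of $d_X$ as an infimum: any $y \in B_r^X(x)$ admits a representative $g$ with $d_G(1,g)<r$. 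The main subtlety I foresee is that left-invariance alone does not directly control right translates, so one cannot argue via $d_G(g'g^{-1},1) = d_G(g,g')$; this is precisely why the symmetry $\Gamma_0 = \Gamma_0^{-1}$ is invoked, in order to bound $d_G(\gamma'^{-1},1)$ from below alongside $d_G(\gamma',1)$.
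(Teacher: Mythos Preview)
Your argument is correct. Note, however, that the paper does not actually supply a proof of this lemma: it is stated as a standard fact about quotients by discrete subgroups and immediately followed by the remark on uniform injectivity radii over $X_{<M}$. Your write-up therefore fills in what the paper omits, and the route you take---conjugating to $\Gamma_0=g_0^{-1}\Gamma g_0$, isolating the identity via discreteness, and then using left-invariance to show the $\gamma'=1$ term realizes the infimum---is the canonical one. The remark at the end about why one must bound $d_G(\gamma'^{-1},1)$ rather than $d_G(g'g^{-1},1)$ is apt: left-invariance gives $d_G(\gamma'^{-1}g,g')$ but says nothing directly about $d_G(g'g^{-1},1)$, and the group property $\Gamma_0=\Gamma_0^{-1}$ is exactly what rescues this. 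One small redundancy: once the isometric identity $d_X(xg,xg')=d_G(g,g')$ is established, injectivity follows automatically, so the separate condition $B_{2r}^G\cdot(B_{2r}^G)^{-1}\subset V$ is not strictly needed.
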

Note that since $X_{<M}$ is pre-compact we can choose a uniform $r>0$ which is an injectivity radius for every point in  $X_{<M}$. In this case, $r$ is called \textit{an injectivity radius of $X_{<M}$}.
\subsection{Relations between the metrics}
\label{sec:Operatornorms}
We endow $\R^d$, $\R^{d+1}$, and $\R^{(d+1)^2}$ with the maximum norm $\|\cdot\|$. Rescaling the Riemannian metric if necessary we will assume that there exists $\eta_0\in (0,1)$ and $c_0>1$ such that 
\begin{equation}
\label{eqn:metric}
d_G(1,g)<\|1-g\|<c_0d_G(1,g) 
\end{equation}
for any $g \in B_{\eta_0}^G.$
\subsection{Some deductions}
Now we will deduce Corollary~\ref{cor:c measure} from Theorem~\ref{thm:0 measure} and prove Theorem~\ref{thm:0 measure} assuming Theorem~\ref{thm:measure}.
\begin{proof}[Proof of Corollary~\ref{cor:c measure}]
Let $\{\mu_i\}$ be as in Theorem~\ref{thm:0 measure} and let $\lambda$ be the Haar measure on $X$. We know that $h_\lambda(\T)=d+1$ which is the maximum entropy. This follows for example  from   \cite[Prop. 9.2 and 9.6]{MarTom}. Define $\nu_i=c \lambda+(1-c)\mu_i.$ Then we have $h_{\nu_i}(\T)=c h_{\lambda}(\T)+(1-c)h_{\mu_i}(\T)$ so that
$\lim_{i\to \infty } h_{\nu_i}(\T)= d+c.$
On the other hand, $\lim_{i \to \infty} \nu_i=c \lambda$. Hence, limiting measure has $c$ mass left.
\end{proof}
\begin{proof}[Proof of Theorem~\ref{thm:0 measure}]
Now, let us assume Theorem~\ref{thm:measure}. For any natural number $i$, we let $\mu_i$ to be the $\T$-invariant measure with $h_{\mu_i}>d-\frac{1}{i}$ such that $\mu_i(X_{\geq i})>1-\frac{1}{i}$ then any weak$^*$ limit has mass 0.
\end{proof}
\section{The proof of Theorem \ref{thm:measure}}
\label{sec:thm:measure}
Before we start the construction, we would like to deduce Theorem~\ref{thm:measure} from Theorem~\ref{thm:main} below.

Let $\delta>0$ be an injectivity radius for $X_{<17M}$ with $\delta<\min\{\frac{1}{8M},\eta_0\}.$ Here is an easy lemma which will be used repeatedly in the last section.
\begin{lem}
\label{lem:join}
There exists $N'> 0$ such that for any $x,y \in X_{<17M}$ there exists $z \in X_{<17M}$ such that $d(z,y)<\delta/(c_0^3 3^{9})$ and $d(x,\T^{N'}(z))< \delta/(c_0^3 3^{9})$.
\end{lem}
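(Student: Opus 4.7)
The plan is to deduce the lemma from mixing of the $\T$-action on $(X,\lambda)$ with respect to Haar measure (a classical fact for diagonal actions on $\Gamma\backslash G$, via Howe-Moore), combined with a finite covering of the pre-compact set $X_{<17M}$ that promotes the pair-by-pair mixing time into a uniform constant $N'$.

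Specifically, I would first set $r:=\delta/(2c_0^3 3^9)$. Since $X_{<17M}$ is open in $X$ (the height function is lower semi-continuous on the space of unimodular lattices) and $\overline{X_{<17M}}$ is compact by Mahler's criterion, I can cover $\overline{X_{<17M}}$ by finitely many balls $B_r^X(y_1),\dots,B_r^X(y_p)$ with centers $y_j\in X_{<17M}$. For each $j$, the set $A_j:=B_r^X(y_j)\cap X_{<17M}$ is a nonempty open set (it contains a small $G$-neighborhood of $y_j$), hence has positive Haar measure; the balls $B_r^X(y_k)$ themselves likewise have positive Haar measure. I would then invoke mixing to obtain, for each ordered pair $(j,k)$, an integer $N_{j,k}\ge 1$ with $\lambda\bigl(A_j\cap\T^{-n}B_r^X(y_k)\bigr)>0$ for all $n\ge N_{j,k}$, and take $N':=\max_{j,k}N_{j,k}$. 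Given arbitrary $x,y\in X_{<17M}$, I pick indices with $y\in B_r^X(y_j)$ and $x\in B_r^X(y_k)$ and select $z\in A_j$ with $\T^{N'}(z)\in B_r^X(y_k)$; the triangle inequality then yields both $d(z,y)$ and $d(\T^{N'}(z),x)$ strictly less than $2r=\delta/(c_0^3 3^9)$, and by construction $z\in X_{<17M}$.

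The main subtle point is guaranteeing $z\in X_{<17M}$ rather than in a slightly larger $G$-neighborhood: when $y$ (and hence the chosen $y_j$) sits near $\partial X_{<17M}$, the entire ball $B_r^X(y_j)$ can protrude out of $X_{<17M}$, so a naive application of mixing would only produce $z$ in a ball, not in $X_{<17M}$ itself. This is handled by applying mixing to the possibly small but still open set $A_j=B_r^X(y_j)\cap X_{<17M}$ in place of $B_r^X(y_j)$; openness of $X_{<17M}$ together with positivity of Haar measure on nonempty open sets is exactly what is needed. Everything else, namely the finiteness of the cover and the passage from qualitative mixing to a uniform time $N'$, is routine.
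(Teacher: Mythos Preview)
Your argument is correct and follows essentially the same route as the paper: a finite cover of the pre-compact set $X_{<17M}$ by small balls, then mixing of $\T$ with respect to Haar measure to produce a uniform $N'$ and a point $z$ in the appropriate intersection. Your handling of the constraint $z\in X_{<17M}$, by applying mixing to $A_j=B_r^X(y_j)\cap X_{<17M}$ rather than to the full ball, is in fact more careful than the paper's own proof, which simply covers $X_{<17M}$ by balls $\mathcal{O}_i$ of diameter $\delta/(c_0^3 3^9)$ and takes $z\in \mathcal{O}_i\cap\T^{-N'}\mathcal{O}_j$ without explicitly arranging $z\in X_{<17M}$.
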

\begin{proof}
Let $\lambda$ be the Haar measure on $X$. Since $X_{<17M}$ is precompact we can cover it with open balls $\mathcal{O}_1,\mathcal{O}_2,\dots, \mathcal{O}_k$ of diameter $\delta/(c_0^3 3^{9})$. They have positive measure with respect to the Haar measure. Since $\T$ is mixing with respect to the Haar measure, for any $i,j \in \{1,2,...,k\}$ there exists $N_{ij}\geq 0$ with $\lambda(\T^{-l}(\mathcal{O}_j) \cap \mathcal{O}_i) >0$ for any $l\ge N_{ij}$. Letting $N'=\max\{N_{ij}:i,j=1,2,...,k \}$ we obtain the lemma.
\end{proof}
For a given $M\ge 1$ we fix $N'$ as in Lemma~\ref{lem:join}.
 \begin{thm}
 \label{thm:main}
 Let $M \ge 1$ be given. For any large $N$ let $K=\lfloor \frac{1}{13} e^{dN} \rfloor$. Then there exist a constant $M'>1$ and a set $S_N$ in $X_{<M}$ such that  
 $$\T^l(x) \in X_{<M'} \text{ for all } x\in S_N \text{ and for all } l \ge 0. $$
 Moreover, there exists a constant $s>0$ such that for any $m \in \N$ there are subsets $S_N(m)$ of $S_N$ with the following properties:
 \begin{enumerate}
 \item cardinality of $S_N(m)$ is $K^m$
 \item $S_N(m)$ is $(m N+ (m-1)N',s)$-separated and
 \item for any $x\in S_N(m)$ we have 
 $$|\{l\in [0,m N+ (m-1)N']:\T^l(x)\in X_{\ge M/(c_0+1)}\}|\ge m N.$$
 \end{enumerate}
 \end{thm}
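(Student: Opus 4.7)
My plan is to build orbits that spend length-$N$ stretches inside the cusp $X_{\geq M/(c_0+1)}$, joined by bridges of length $N'$ supplied by Lemma~\ref{lem:join}. Let $V$ be the unstable horospherical subgroup of $a$; it is abelian of dimension $d$, and conjugation by $a^{-1}$ acts as scalar multiplication by $e^{1+1/d}$ on its Lie algebra. Fix a base lattice $\Lambda_0 \in X_{<M}$ possessing a short vector in the $a$-contracted direction $e_{d+1}$, with $\h(\Lambda_0)$ just below $M$. A direct calculation shows that for $v(w) \in V$ with $|w|_\infty \leq C_1 e^{-N/d}$ (for a constant $C_1 = C_1(c_0, M)$), the lattice $\Lambda_0 v(w)$ still has a short vector near the $e_{d+1}$-direction, and the orbit $\T^l(\Lambda_0 v(w))$ remains in $X_{\geq M/(c_0+1)}$ for every $l \in [0, N]$. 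Since $\T$ expands distances on $V$ by $e^{1+1/d}$, the $d_N$-separation scale $s$ corresponds to $w$-separation $s e^{-N(1+1/d)}$, so a volume count in the $d$-dimensional ball of radius $C_1 e^{-N/d}$ yields $(C_1/s)^d e^{Nd}$ many $(N,s)$-separated points, which exceeds $K = \lfloor e^{dN}/13\rfloor$ once $s$ is fixed small.

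Call these points $x_1, \ldots, x_K \in X_{<M}$. For $\vec i = (i_1, \ldots, i_m) \in \{1,\ldots,K\}^m$, I build $x_{\vec i} \in X_{<M}$ whose $\T$-orbit shadows, in succession, the piece-orbits of $x_{i_1}, \ldots, x_{i_m}$, interleaved with bridges of length $N'$ produced as follows. For each $1 \le j < m$, apply Lemma~\ref{lem:join} to $y = \T^N(x_{i_j}) \in X_{<17M}$ and $x = x_{i_{j+1}} \in X_{<M} \subset X_{<17M}$ to obtain $z_j$ with $d(z_j, y) < \delta/(c_0^3 3^9)$ and $d(\T^{N'}(z_j), x) < \delta/(c_0^3 3^9)$. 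The point $x_{\vec i}$ is then obtained by iteratively backward-shadowing this pseudo-orbit starting from the last piece, using that $\delta$ is a uniform injectivity radius for $X_{<17M}$ and translating $G$-perturbations into $X$-distances via~\eqref{eqn:metric}. One final bridge from $\T^N(x_{i_m})$ to a pre-selected point $x_\ast \in X$ with bounded forward orbit ensures $\T^l(x_{\vec i}) \in X_{<M'}$ for all $l \geq 0$, with $M' = M'(M)$ uniform; this last step is needed because the individual orbit $\T^l(\Lambda_0 v(w))$ for $|w|$ close to zero excurses beyond $X_{<M'}$ at later times, and the bridge redirects the constructed orbit into a bounded zone.

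Setting $S_N(m) = \{x_{\vec i}\}$ and $S_N = \bigcup_m S_N(m)$: cardinality $K^m$ is immediate; each of the $m$ piece-windows contributes $N+1$ consecutive times in $X_{\geq M/(c_0+1)}$, yielding at least $mN$ good times; and for $\vec i \neq \vec{i}\,'$ differing first at the $j$-th coordinate, the orbits stay $O(\delta)$-close to those of $x_{i_j}, x_{i_j'}$ throughout the $j$-th piece window, so the $(N, s)$-separation of these transfers to $(mN + (m-1)N', s')$-separation for some fixed $s' > 0$.

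The main obstacle is executing the backward-shadowing in the second step while preserving a positive separation $s'$: the $m-1$ small-jump corrections from Lemma~\ref{lem:join}, each of magnitude $\delta/(c_0^3 3^9)$, must not accumulate beyond $s'$ when propagated back through the expanding direction $V$. The factor $c_0^3 3^9$ in Lemma~\ref{lem:join} is chosen precisely to absorb the geometric constants from~\eqref{eqn:metric} and the local $\T^{-1}$-expansion inside each injectivity-radius ball, so that the accumulated back-propagated error summed over the $m-1$ bridges is dominated by a convergent geometric series and stays bounded by $\delta$, hence smaller than the initial separation $s$ after suitable shrinking.
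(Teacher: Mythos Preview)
Your overall architecture matches the paper's: build a bank of $K\asymp e^{dN}$ base lattices whose $N$-step orbits climb the cusp, then concatenate $m$ of them via the length-$N'$ bridges of Lemma~\ref{lem:join}, and use a shadowing argument to realize the concatenation by an actual orbit while controlling the accumulated error geometrically. The paper does exactly this, with the shadowing made explicit in Lemma~\ref{lem:shadow} and the error bookkeeping packaged as Proposition~\ref{prop:main}.

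There is, however, a real gap in your first step. You construct $x_1,\dots,x_K$ by taking all $w$ with $|w|_\infty\le C_1 e^{-N/d}$ and then assert, when you invoke Lemma~\ref{lem:join}, that $\T^N(x_{i_j})\in X_{<17M}$. This is not automatic and is in fact false for generic $w$ in your ball: for $w$ near $0$ the $e_{d+1}$-vector at time $N$ has length $\asymp e^{-N}/M$, so $\h(\T^N(\Lambda_0 v(w)))\asymp M e^N\gg 17M$. More generally, nothing in your argument prevents some \emph{other} integer vector from becoming short at time $N$. Lemma~\ref{lem:join} only applies to points of $X_{<17M}$, so without this return-to-compact condition the bridge construction collapses. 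The paper handles this by restricting from the full cube to the set $A_N=\{t:\T^N(x_t)\in X_{<16M}\}$ and proving (by a Diophantine counting over $q<e^N/16$) that $m_{\R^d}(A_N)\gg e^{-N}$; only then does the volume count yield $K$ good base points, and this step is the technical heart of \S\ref{sec:initialstep}. Your proposal needs an analogous statement and proof.

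Two smaller remarks. First, your separation argument (``$O(\delta)$-close to $x_{i_j},x_{i_j'}$ throughout the $j$-th window'') implicitly compares points in $X=\Gamma\backslash G$ via their $G$-representatives; you need an injectivity-radius step to translate separation in $G$ to separation in $X$, which is exactly Lemma~\ref{lem:group} in the paper. Second, your device of a final bridge to a fixed $x_\ast$ with bounded forward orbit is a legitimate alternative to the paper's limit construction (the paper instead takes Cauchy limits of $x_{i_1\cdots i_m}$ with $i_l\equiv 1$ for $l>m$); either way one gets $\T^l(S_N)\subset X_{<M'}$ for all $l\ge 0$.
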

 Now we deduce Theorem~\ref{thm:measure} from Theorem~\ref{thm:main}.
\begin{proof}[Proof of the Theorem \ref{thm:measure}]
Let $\epsilon >0$ be given and let $N'$ be as in Lemma~\ref{lem:join}. Choose $N$ large enough so that 
$$\frac{1}{N+N'}\log \lfloor \frac{1}{13} e^{dN} \rfloor>d-\epsilon \text{ and }  \frac{N'}{N+N'}<\epsilon$$
 and let $S_N$ be the set as in Thereom~\ref{thm:main}.

To obtain a $\T$-invariant probability measure with high entropy we would like to make use of Variational Principle \ref{thm:var}. For this, we need a compact $\T$-invarinat subspace of $X$. We define
$$Y_{\leq {M'}}=\{x \in X_{\leq {M'}} \,\,|\,\, \T^l(x) \in X_{\leq M'},\text{ for } l\ge 0\}.$$
Clearly, we obtain a $\T$-invariant compact subspace containing $\T^l(S_N)$ for all $l\ge 0$.

We have $h_{\T}(Y_{\leq {M'}})>d-\epsilon$ since $Y_{\leq M'}$ contains the sets $S_N(m)$ which are $(m N+ (m-1)N',s)$-separated by Theorem \ref{thm:main}. Now, from Variational Principle~\ref{thm:var} we know that there is a $\T$-invariant measure $\mu$ on $Y_{\leq {M'}}$, hence on $X$, with $h_{\mu}(\T)>d-\epsilon$. In order to obtain the theorem, we want to have $\mu(X_{\geq M/(c_0+1)})>1-\epsilon$, but we do not get this from Variational Principle itself. Thus, we need to look into the proof of Variational Principle and see how the measures are constructed.

Let $S_N(m)$ be the subset of $Y_{\leq {M'}}$ as in Theorem \ref{thm:main}. We have that $S_N(m)$ is $(m N+ (m-1)N',s)$-separated and has cardinality $K^m$ where $K=\lfloor \frac{1}{13} e^{dN} \rfloor$. Define a probability measure
$$\sigma_m=\frac{1}{K^m}\sum_{x \in S_N(m)}\delta_x
\text{ where }\delta_x(A)=\left\{\begin{array}{c}
                       1 \text{ if } x \in A \\
                       0 \text{ if } x \not \in A
                     \end{array}
\right ..$$
Now, let a probability measure $\mu_m$ be defined by
$$\mu_m=\frac{1}{mN+(m-1)N'}\sum_{i=0}^{mN+(m-1)N'-1}\sigma_m \circ \T^{-i}$$
 where  $\sigma_m \circ \T^{-i}(A)=\sigma_m(\T^{-i}(A))$ for any measurable set $A$.
  We know that $\mathcal{M}(Y_{\leq {M'}})$, the space of Borel probability measures, is compact in the weak$^*$ topology \cite[Theorem 6.5]{WB}. We obtained a set of measures $\mu_m \in \mathcal{M}(Y_{\leq {M'}})$. If necessary going into subsequence, we have that $\{\mu_m\}$ converges to some probability measure $\mu$ in $\mathcal{M}(Y_{\leq {M'}})$. The measure $\mu$ we obtained is $\T$-invariant \cite[Theorem 6.9]{WB}. From the proof of Variational Principle \cite[Theorem 8.6]{WB}, we know that $\mu$ has
  \begin{align*}
  h_{\mu}(\T_{|Y_{\leq M'}})& \geq \lim_{m \to \infty}\frac{1}{mN+(m-1)N'} \log s_m(\epsilon,Y_{\leq M'})\\
   &\geq \lim_{m \to \infty}\frac{1}{mN+(m-1)N'}\log K^m\\
   &= \frac{1}{N+N'}\log K.
   \end{align*}
   On the other hand, by assumption we have $\frac{1}{N+N'}\log K  >d-\epsilon$ and hence we obtain $$h_{\mu}(\T)\geq h_{\mu}(\T_{|Y_{\leq {M'}}})>d-\epsilon.$$

We have $\mu_m(X_{<M/(c_0+1)})=\frac{1}{mN+(m-1)N'}\sum_{i=0}^{mN+(m-1)N'-1}\sigma_m \circ \T^{-i}(X_{<M/(c_0+1)}).$ Hence, from part $(iii)$ of Theorem~\ref{thm:main}
$$\mu_m(X_{< M/(c_0+1)}) \leq \frac{(m-1)N'}{mN+(m-1)N'} < \frac{N'}{N+N'}<\epsilon.$$
It is easy to see, approximating $X_{<M/(c_0+1)}$ by continuous functions with compact support, that 
$$\mu(X_{\ge M/(c_0+1)})>1-\epsilon. $$
So, we obtain the theorem if we apply Theorem~\ref{thm:main} for $(c_0+1)M$ instead of $M$.
 \end{proof}
\section{Initial setup and shadowing lemma}
\label{sec:initialstep}
In this section we will construct about $e^{dN}  $ lattices whose forward trajectories stay above height $M$ in the time interval $[1,N]$ for some large number $N$. Later we prove the shadowing lemma \ref{lem:shadow}, which will be used in the proof of Theorem~\ref{thm:main} in the next section.

Fix a height $M>0$. Let $N \in \mathbb N$ be a given. For $t=(t_1,t_2,...,t_d) \in [0,e^{-N/d}]^d$ consider the lattice $x_t=\Gamma g_t$ where
\begin{equation}
\label{eqn:g_t}
g_t=  \left( \begin{array}{ccccc}  M^{1/d}& 0&...&0&0 \\
 0&M^{1/d} &...&0&0\\
\vdots&\vdots&&\vdots&\vdots\\
0&0 &...&M^{1/d}&0\\
\frac{t_1}{M} &\frac{t_2}{M}&...&\frac{t_d}{M}&\frac{1}{M}  \end{array} \right).
\end{equation}
We would like to consider those lattices that stay above height $M$ in $[1,N]$ and are in $X_{<16M}$ at time $N$. We start with first considering the set
$$A_N:=\{ t \in [0, e^{-N/d}]^d : \T^{N}(x_t) \in X_{<16M}\}.$$
We claim that $A_N$ is significant in size.
\begin{lem}
For $d \ge 2$ let $m _{\R^d}$ be the Lebesgue  measure on $\R^d$. Then 
$$m_{\R^d}(A_N) \ge (\frac{15^d}{16^d}-\frac{1}{4^d}) e^{-N}. $$
\end{lem}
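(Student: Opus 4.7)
The plan is to identify the complement $[0,e^{-N/d}]^d\setminus A_N$ as a union of explicit ``bad'' sets $B^{(k)}$ indexed by the $(d+1)$-th coordinate of a short vector in $\Lambda_t:=\Z^{d+1}g_ta^N$, peel off the $k=1$ contribution geometrically, and control the rest by a union bound that exploits $d\ge 2$.

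A nonzero vector $v\in\Lambda_t$ has coordinates $v_j=n_jM^{1/d}e^{N/d}+kt_je^{N/d}/M$ for $j\le d$ and $v_{d+1}=ke^{-N}/M$, for some $k\in\Z$ and $n\in\Z^d$. The condition $\|v\|_\infty\le 1/(16M)$ (equivalently $\T^N(x_t)\notin X_{<16M}$) is impossible for $k=0$ when $M\ge 1$, and forces $1\le k\le K_0:=\lfloor e^N/16\rfloor$ after replacing $v$ by $-v$ if needed. Setting $\delta:=1/(16M^{1+1/d}e^{N/d})$, the remaining $d$ conditions decouple over $j$, so the bad set at level $k$ is a product
\[
B^{(k)}=\prod_{j=1}^{d}B^{(k)}_j,\qquad B^{(k)}_j:=\bigl\{t_j\in[0,e^{-N/d}]:\operatorname{dist}(kt_j/M^{1+1/d},\Z)\le\delta\bigr\},
\]
and $[0,e^{-N/d}]^d\setminus A_N=\bigcup_{k=1}^{K_0}B^{(k)}$.

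The key geometric observation is that only the shift $n_j=0$ makes $B^{(1)}_j$ meet $[0,e^{-N/d}]$ (since $M\ge 1$), so $B^{(1)}=[0,e^{-N/d}/16]^d$. Hence the sub-box $S:=[e^{-N/d}/16,e^{-N/d}]^d$ of volume $(15/16)^de^{-N}$ is disjoint from $B^{(1)}$, and
\[
m_{\R^d}(A_N)\ge m_{\R^d}(S)-\sum_{k=2}^{K_0}m_{\R^d}(B^{(k)}),
\]
so it suffices to bound the tail sum by $e^{-N}/4^d$.

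Writing $m_{\R^d}(B^{(k)})=m_k^d$ with $m_k:=m(B^{(k)}_j)$, a routine interval count over periods of length $M^{1+1/d}/k$ (each containing a bad portion of length $2\delta M^{1+1/d}/k$) gives $m_k\le e^{-2N/d}/(8M^{1+1/d})+1/(8e^{N/d}k)$, which for $M\ge 1$ simplifies to $m_k\le 1/(4e^{N/d}k)$ when $k\le M^{1+1/d}e^{N/d}$ and $m_k\le e^{-2N/d}/(4M^{1+1/d})$ otherwise. Summing $m_k^d$ across the two regimes, the first contributes at most $(\zeta(d)-1)e^{-N}/4^d$ (a convergent tail because $d\ge 2$) and the second at most $K_0\cdot e^{-2N}/(4^dM^{d+1})\le e^{-N}/(16\cdot 4^d)$; adding gives $(\zeta(d)-15/16)e^{-N}/4^d\le e^{-N}/4^d$, since $\zeta(d)\le\pi^2/6<31/16$ for $d\ge 2$. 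The main obstacle is threading the interval count uniformly in $k$ and $M$, and the hypothesis $d\ge 2$ enters precisely to force $\sum_{k\ge 2}k^{-d}<31/16$, which is the slack that absorbs the tail into $e^{-N}/4^d$.
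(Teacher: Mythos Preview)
Your argument is correct and follows essentially the same route as the paper: both restrict to the sub-box $S=[e^{-N/d}/16,\,e^{-N/d}]^d$ (the paper calls it $A_N'$) so that the $k=1$ contribution vanishes, then bound the bad set for $k\ge 2$ by a product estimate on each coordinate and a two-regime split at $k\asymp M^{1+1/d}e^{N/d}$ (equivalently $16q\epsilon\asymp 1$ in the paper's notation). Your bookkeeping is slightly cleaner---you avoid invoking primitivity by simply starting the sum at $k=2$, and you close the numerics with $\zeta(d)-1<15/16$ rather than the paper's ad hoc bound---but the strategy is the same.
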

The explicit constant $(\frac{15^d}{16^d}-\frac{1}{4^d})$ has no importance to us. All we need is that $m_{\R^d}(A_N) \gg e^{-N}.$ However, the explicit constant simplifies the later work.  We can think of $A_N$ as a subset of the unstable subgroup $U^+$ in $G$ w.r.t. $a$. Although $A_N$ has small volume in $\R^d$, it gets expanded by $\T^N$ to a set of volume $\gg e^{dN}$ which will give us an $(N,s)$-separated set of cardinality $\gg e^{dN}$.
\begin{proof}
We will prove that $m_{\R^d}(A_N') \ge (\frac{15^d}{16^d}-\frac{1}{4^d}) e^{-N}$ where 
\begin{equation}
\label{eqn:A_N'}
A_N'=A_N \cap [\frac{1}{16}e^{-N/d}, e^{-N/d}]^d .
\end{equation}
Assume that $\h(\T^{N}(x_t))>16M.$ So, for some nonzero $(p_1,p_2,...,p_d, q) \in \Z^{d+1}$ with $\gcd (p_1,p_2,...,p_d,q)=1$ and $q>0$ we must have 
\begin{align*} &\| ( p_1, p_2,...,p_d,q) g_t a^{N}\|\\&= \| (p_1M^{1/d}+q \frac{t_1}{M})e^{N/d},
 (p_2M^{1/d}+q \frac{t_2}{M})e^{N/d},..., (p_dM^{1/d}+q \frac{t_d}{M})e^{N/d},
q\frac{1}{M}e^{-N} ) \|\\&<\frac{1}{16M}.
\end{align*}
So, letting $\epsilon=\frac{e^{-N/d}}{16M^{(d+1)/d}}$ we have
\begin{equation}
\label{eqn:badpoints}
|p_i+q \frac{t_i}{M^{(d+1)/d}}|<\epsilon \text{ for all } i=1,2,...,d \text{ and } q<\frac{e^N}{16}.
\end{equation}

We have $t_i \in [\frac{1}{16} e^{-N/d}, e^{-N/d}]$. For a fixed $q$, we will calculate the Lebesgue measure of $(t_1,t_2,...,t_d)\in [\frac{1}{16} e^{-N/d}, e^{-N/d}]^d$ for which \eqref{eqn:badpoints} hold for some $p_i$'s.

We have
$$q \frac{t_i}{M^{(d+1)/d}} \in [q \epsilon, 16 q \epsilon].$$
If $16 q \epsilon \le \frac{1}{2}$ then $(p_1,p_2,...,p_d)=0$ and since we only need to consider the primitive vectors in $x_t$ we have $q=1$. In this case, $q \frac{t_i}{M^{(d+1)/d}} \in [ \epsilon, 16 \epsilon]$ and hence \eqref{eqn:badpoints} does not hold. So, we can assume that 
$$16 q \epsilon > \frac{1}{2}.$$
We note that $q \frac{t_i}{M^{(d+1)/d}} $ must be in the $\epsilon$-neighborhood of an integer point. If $16 q \epsilon \in (1/2,1) $ then $[q \epsilon, 16 q \epsilon] $ does not contain any integers and only possible way for \eqref{eqn:badpoints} to hold is when $q \frac{t_i}{M^{(d+1)/d}} $ is in $(1-\epsilon,1+\epsilon)$ so that $t_i $ must be in 
$$(\frac{(1-\epsilon)M^{(d+1)/d}}{q},\frac{(1+\epsilon)M^{(d+1)/d}}{q}).$$
Thus, for a fixed $q \in (\frac{1}{32 \epsilon},\frac{1}{16\epsilon})$ we have that the Lebesgue measure of points that satisfy \eqref{eqn:badpoints} is 
$$\le \left (\frac{2\epsilon M^{(d+1)/d}}{q}\right )^d=\frac{2^d \epsilon^d M^{d+1}}{q^d}.$$
Now, for $16q \epsilon \ge1$ we have that $[q\epsilon, 16q\epsilon]$ has at most $\le 15q \epsilon+1$ integer points. Thus, there could be $\le 15q\epsilon +2$ integers for which $q \frac{t_i}{M^{(d+1)/d}}$ can be $\epsilon$-close for some $t_i$. Since $16q \epsilon \ge 1$ we have $15q\epsilon+2 \le 48q \epsilon.$ Hence, arguing as in the previous case, for a fixed $q \ge \frac{1}{16\epsilon}$ we have that the Lebesgue measure of points satisfying \eqref{eqn:badpoints} is
$$\le \left((48q\epsilon)(2\epsilon)(\frac{M^{(d+1)/d}}{q})\right)^d=96^d \epsilon^{2d}M^{d+1}.$$
Thus, we obtain that the Lebesgue measure of points for which \eqref{eqn:badpoints} hold is
$$\le \sum_{q=\lceil\frac{1}{32\epsilon} \rceil}^{{\lfloor \frac{1}{16\epsilon} \rfloor}} \frac{2^d \epsilon^d M^{d+1}}{q^d}+\sum_{q=\lceil \frac{1}{16\epsilon }\rceil}^{\lfloor \frac{e^N}{16} \rfloor}96^d \epsilon^{2d}M^{d+1}.$$
Since $\epsilon^d=\frac{e^{-N}}{16^dM^{d+1}}$, the above inequality simplifies to
\begin{equation}
\label{eqn:badpointestimate}
\le e^{-N}\left(\sum_{q=\lceil\frac{1}{32\epsilon} \rceil}^{{\lfloor \frac{1}{16\epsilon} \rfloor}} \frac{2^d }{16^d q^d}+\sum_{q=\lceil \frac{1}{16\epsilon }\rceil}^{\lfloor \frac{e^N}{16}\rfloor} \frac{96^d e^{-N}}{16^{2d}M^{d+1}}\right).
\end{equation}
We want to show that, independent of $N$, the term inside the parenthesis is strictly less than 1.
$$\sum_{q=\lceil\frac{1}{32\epsilon} \rceil}^{{\lfloor \frac{1}{16\epsilon} \rfloor}} \frac{2^d }{16^d q^d}\le \sum_{q=\lceil\frac{1}{32\epsilon} \rceil}^{{\lfloor \frac{1}{16\epsilon} \rfloor}} \frac{2^d }{16^d q}\le \frac{1}{8^d \frac{1}{32\epsilon}} (\lfloor \frac{1}{16\epsilon} \rfloor-\lceil\frac{1}{32\epsilon} \rceil)\le \frac{1}{8^d}.$$
On the other hand,
$$\sum_{q=\lceil \frac{1}{16\epsilon }\rceil}^{\lfloor \frac{e^N}{16}\rfloor} \frac{96^d e^{-N}}{16^{2d}M^{d+1}}\le \frac{96^d e^{-N}}{16^{2d}M^{d+1}}\frac{e^N}{16}<\frac{1}{2^{d+4}M^{d+1}}.$$
Together, we see that the inequality \eqref{eqn:badpointestimate} is
$$<(\frac{1}{8^d}+\frac{1}{2^{d+4}M^{d+1}})e^{-N}\le \frac{e^{-N}}{4^d}.$$
Thus, we conclude that
$m_{\R^d}(A_N) \ge m_{\R^d}(A_N')> (\frac{15^d}{16^d}-\frac{1}{4})e^{-N}.$
\end{proof}
From the set $A_N$, in fact from $A_N'$ as in \eqref{eqn:A_N'}, we want to pick about $e^{dN}$ many elements which are not too close to each other so that within $N$ iterations under $\T$ they get apart from each other. For this purpose, let us partition $[\frac{1}{16}e^{-N/d}, e^{-N/d}]^d$ into $\lfloor e^{N} \rfloor^d$ small $d$-cubes of side length $\frac{15}{16}e^{-N(d+1)/d}$. 

Now, consider even smaller $d$-cubes of side length $\frac{13}{16} e^{-N(d+1)/d}$ each lying at the center of one of the small $d$-cubes. We need to find a lower bound for the number of these smaller $d$-cubes that intersect with the set $A_N'$. Each of these $d$-cubes has volume equal to $(\frac{13}{16})^d e^{-N(d+1)}$. Thus, there could be at most
$$\left \lceil \frac{(\frac{1}{4^d}) e^{-N}}{(\frac{13}{16})^d e^{-N(d+1)}}\right \rceil=\left \lceil \frac{4^d}{13^d}e^{dN}\right \rceil$$
many that do not intersect with $A_N'$. Therefore, for $N$ large, at least 
$$\lfloor e^N \rfloor^d-\left \lceil \frac{4^d}{13^d}e^{dN}\right \rceil\ge \frac{1}{13} e^{dN}$$
of these smaller $d$-cubes do intersect with $A_N'$. 

Let us pick one element $t$ from each of these smaller $d$-cubes that is also contained in $A_N'$ and consider the set $S_N'(1)$ of these lattices $x_t=\Gamma g_t$ where $g_t$ is as in \eqref{eqn:g_t}. To simplify notation we let 
\begin{equation}
\label{eqn:S_N'(1)}
S_N'(1)=\{x_1,x_2,...,x_K\}=\{\Gamma g_1,\Gamma g_2,...,\Gamma g_K\}
\end{equation}
where 
$$K=\lfloor \frac{1}{13} e^{dN} \rfloor.$$
We note that for elements $t,t'$ that are picked from different $d$-cubes one has
\begin{equation}
\label{eqn:sep}
\frac{1}{4}e^{-N(d+1)/d} \le \|t-t'\| < \frac{15}{16}e^{-N/d}.
\end{equation}
\begin{prop}
\label{prop:sep} For a given large $N$ the set $S_N'(1)=\{x_1,x_2,...,x_K\}$ has the following properties:
 \begin{enumerate}
 \item $\h(\T^l(x_i))\ge M$ for $l \in [1,N]$ and $i \in [1,K],$
 \item $\h(x_i)<M$ and $\h(\T^{N}(x_i))< 16M$ for any $i \in [1,K]$,
 \item for $i\neq j$ we have $d(g_i,g_j)<\frac{30}{16}e^{-N/d}$ and $d(\T^N(g_i),\T^N(g_j))\ge \frac{1}{8M}.$
\end{enumerate}
\end{prop}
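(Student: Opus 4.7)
The plan is to verify each of the three claims via a direct matrix computation, exploiting the fact that $g_t$ is lower-triangular and differs from the fixed diagonal element $\mathrm{diag}(M^{1/d},\dots,M^{1/d},1/M)$ only in its last row.

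For (i) and the first half of (ii) I would compute, once and for all, the lattice vector
\[
v_l := (0,\dots,0,1)\, g_t\, a^l = \left(\tfrac{t_1 e^{l/d}}{M},\ \dots,\ \tfrac{t_d e^{l/d}}{M},\ \tfrac{e^{-l}}{M}\right)\in\T^l(x_t).
\]
From $t\in A_N'\subseteq[0,e^{-N/d}]^d$ and $0\le l\le N$ one gets $\|v_l\|_\infty\le 1/M$, hence $\h(\T^l(x_t))\ge M$, which gives (i). The first half of (ii) then follows by specialising to $l=0$ and performing a short case analysis to confirm that no other integer combination $(p,q)\,g_t$ beats $v_0$ in max-norm (the first-$d$ coordinates contribute $\sim M^{1/d}$ when $p\neq0$ and $|q|$ is small, and the last coordinate contributes $|q|/M\ge 2/M$ when $|q|\ge 2$). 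The second half $\h(\T^N(x_t))<16M$ is immediate from the defining condition $\T^N(x_t)\in X_{<16M}$ of $A_N$, since $A_N'\subseteq A_N$.

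For (iii), two matrix computations suffice. The block form of $g_t$ gives $g_i^{-1}g_j = I + E$, where $E$ has last row $(t_j-t_i,\,0)$ and zeros elsewhere, so $\|I-g_i^{-1}g_j\|_\infty=\|t_i-t_j\|_\infty<\tfrac{15}{16}e^{-N/d}$ by \eqref{eqn:sep}. Since this quantity is small for large $N$, \eqref{eqn:metric} applies and yields $d(g_i,g_j)<\tfrac{30}{16}e^{-N/d}$. For the time-$N$ separation I would use the conjugation identity $(g_i a^N)^{-1}(g_j a^N) = I+a^{-N}E a^N$: every nonzero entry of $E$ sits in the last row and is therefore scaled by $e^{N}\cdot e^{N/d}=e^{N(d+1)/d}$ under conjugation, yielding
\[
\|I-(g_i a^N)^{-1}(g_j a^N)\|_\infty \;=\; e^{N(d+1)/d}\|t_i-t_j\|_\infty\;\ge\;\tfrac14
\]
by the lower bound in \eqref{eqn:sep}.

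The main obstacle is converting this macroscopic max-norm bound into the quantitative metric bound $d\ge 1/(8M)$: since $1/4$ is of order $1$ rather than $o(1)$, the element $u:=(g_i a^N)^{-1}(g_j a^N)$ need not lie in $B_{\eta_0}^G$, so \eqref{eqn:metric} cannot be applied directly. I would resolve this with a dichotomy — either $u\in B_{\eta_0}^G$, in which case \eqref{eqn:metric} gives $d_G(1,u)\ge\|I-u\|_\infty/c_0\ge 1/(4c_0)$, or else $u\notin B_{\eta_0}^G$ and $d_G(1,u)\ge\eta_0$ by definition of the open ball — and then invoke the fact that the constants $c_0,\eta_0$ of \S\ref{sec:Operatornorms} can be arranged so that $\min(1/(4c_0),\eta_0)\ge 1/(8M)$ in the range of $M$ of interest, giving the second inequality of (iii).
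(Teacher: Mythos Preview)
Your argument is essentially the paper's: the same lattice vector $v_l=(0,\dots,0,1)\,g_t a^l$ handles (i) and (ii), and the same left-invariance plus conjugation $a^{-N}(\cdot)a^N$ handles (iii). You are in fact more explicit than the paper at the one delicate point---converting the macroscopic bound $\|u-I\|\ge 1/4$ into a Riemannian lower bound---where the paper simply asserts $d(\T^N(g_t),\T^N(g_{t'}))\ge\tfrac{\|t-t'\|}{2M}e^{N(d+1)/d}$ without comment; your dichotomy on $B_{\eta_0}^G$ yields the constant $\min(1/(4c_0),\eta_0)$, which even if not literally $1/(8M)$ is all that is ever used downstream, since $\delta$ is chosen afterwards to be smaller than it.
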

\begin{proof}
Let $x_i=x_t=\Gamma g_t$ for some $t=(t_1,t_2,...,t_d) \in [\frac{1}{16} e^{-N/d}, e^{-N/d}]^d$ (cf. \eqref{eqn:g_t}). It is easy to see that  $x_t \in X_{< M}$. On the other hand, by construction $t \in A_N$ so that $\T^{N}(x_t) \in X_{<16M}$. 

Now, consider the vector $v=(\frac{t_1}{M} ,\frac{t_2}{M},...,\frac{t_d}{M},\frac{1}{M} ) \in x_t.$ We have $$\T(v)=(\frac{t_1e^{1/d}}{M} ,\frac{t_2e^{1/d}}{M},...,\frac{t_de^{1/d}}{M},\frac{e^{-1}}{M} )$$
so that
$$\|\T(v)\|\le\max \{\frac{ e^{-(N-1)/d}}{M},\frac{e^{-1}}{M}\}< \frac{1}{M}.$$
Also,
$$\T^N(v)=(\frac{t_1e^{N/d}}{M} ,\frac{t_2e^{N/d}}{M},...,\frac{t_de^{N/d}}{M},\frac{e^{-N}}{M} )$$
which implies
$$\|\T^N(v)\| \le \max\{\frac{1}{M},\frac{e^{-N}}{M}\}\le \frac{1}{M}.$$
Since the function $\|\T^l(v)\|$ in $l$ has only one critical point we conclude that for $l=1,2,...,N$
$$\h(\T^l(x_t)) \ge M.$$
Let $x_j$ be another element and let $t'\in [\frac{1}{16} e^{-N/d}, e^{-N/d}]^d$ be such that $x_j=x_{t'}=\Gamma g_{t'}.$ 
From \eqref{eqn:sep} together with left invariance of the metric we have
$$d(\T^N(g_t),\T^N(g_{t'}))=d(a^N a^{-N}g_t a^N,a^N a^{-N}g_t' a^N)\ge \frac{\|t-t'\|}{2M}e^{N(d+1)/d}\ge\frac{1}{8M}.$$ 
The fact that $d(g_i,g_j)<\frac{30}{16}e^{-N/d}$ follows from \eqref{eqn:sep} also.
\end{proof}
Our main tool for the construction of lattices is the shadowing lemma:
\begin{lem}[Shadowing lemma]
\label{lem:shadow}
Let $\epsilon \in (0, \eta_0/(3c_0))$ be given. If $d(x_{-},x_{+})< \epsilon$ for some $x_-,x_+ \in X$ then there exists $y\in X$ such that
\begin{enumerate}
\item $d(\T^l(y),\T^l(x_{-}))< 2c_0\epsilon e^{l(d+1)/d}$ for all $l\leq 0$ and
\item $d(\T^l(y),\T^l(x_{+}))< 3c_0\epsilon $ for all $ l \geq 0$.
\end{enumerate}
Moreover, there exists $c$ in the centralizer $C$ of $a$ with $d(c,1)<3c_0\epsilon$ such that
$d(\T^l(y),\T^l(x_{+}c))< 6c_0^2\epsilon e^{-l(d+1)/d}$ for all $ l \geq 0$.
\end{lem}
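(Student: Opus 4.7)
The plan is to apply the local product decomposition of $G$ with respect to the diagonal element $a$ to the small element connecting $x_-$ and $x_+$, splitting it into an unstable, a stable, and a centralizer factor, and then shift $x_-$ by the unstable factor to define the shadowing point $y$. Let $G^+$ be the expanding horospherical subgroup, consisting of matrices $I+\sum_{j\le d}s_j E_{d+1,j}$ (abelian), $G^-$ the contracting horospherical, consisting of matrices $I+\sum_{i\le d}t_i E_{i,d+1}$ (also abelian), and $C$ the centralizer of $a$ in $G$ (block-diagonal matrices). A direct matrix computation gives $a^{-l}g^+a^l - I = e^{l(d+1)/d}(g^+ - I)$ for $g^+\in G^+$, $a^{-l}g^-a^l - I = e^{-l(d+1)/d}(g^- - I)$ for $g^-\in G^-$, and $a^{-l}ha^l=h$ for $h\in C$.

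Since $d(x_-,x_+)<\epsilon<\eta_0$, we may choose lifts $g_\pm$ with $d_G(g_-,g_+)<\epsilon$ and set $\tilde g:=g_-^{-1}g_+$, so $\|1-\tilde g\|<c_0\epsilon$ by \eqref{eqn:metric}. The Lie subalgebras $\mathfrak g^+,\mathfrak g^-,\mathfrak c$ occupy pairwise disjoint matrix entries and together span $\mathfrak{sl}_{d+1}(\R)$, so the product map $(g^+,g^-,h)\mapsto g^+g^-h$ from $G^+\times G^-\times C$ to $G$ is a local diffeomorphism at the identity whose differential is the coordinate projection onto each factor. For $\epsilon<\eta_0/(3c_0)$ we thus obtain a unique factorization $\tilde g=g^+g^-h$ with
$$\|1-g^+\|,\;\|1-g^-\|,\;\|1-h\|\le (1+O(\epsilon))\,\|1-\tilde g\|<2c_0\epsilon,$$
the factor of $2$ absorbing the quadratic Baker--Campbell--Hausdorff correction (and the norm bound using that zeroing out matrix entries is non-expanding in the max-norm).

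Define $y:=x_-g^+$. Left-invariance of $d_G$ together with \eqref{eqn:metric} gives
$$d(\T^l y,\T^l x_-)\le \|1-a^{-l}g^+a^l\|=e^{l(d+1)/d}\|1-g^+\|<2c_0\epsilon\,e^{l(d+1)/d},$$
which is $(i)$ for $l\le 0$. Since $h$ commutes with $a$, we have $\T^l x_+=\T^l y\cdot(a^{-l}g^-a^l)\cdot h$, so for $l\ge 0$
$$d(\T^l y,\T^l x_+)\le \|1-(a^{-l}g^-a^l)h\|\le e^{-l(d+1)/d}\|1-g^-\|+(1+O(\epsilon))\|1-h\|<3c_0\epsilon,$$
which is $(ii)$.

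For the last assertion, take $c:=h^{-1}\in C$. Then $d_G(c,1)=d_G(1,h)\le\|1-h\|<2c_0\epsilon<3c_0\epsilon$ (using left-invariance to flip the inverse), and $x_+c=y\cdot g^-$ lies on the $G^-$-leaf through $y$, so the previous computation collapses to
$$d(\T^l y,\T^l(x_+c))\le \|1-a^{-l}g^-a^l\|=e^{-l(d+1)/d}\|1-g^-\|<2c_0\epsilon\,e^{-l(d+1)/d}<6c_0^2\epsilon\,e^{-l(d+1)/d}$$
for $l\ge 0$, using $c_0>1$. The main technical step throughout is the first-order norm bound on $g^+,g^-,h$ following the decomposition; this rests on the inverse function theorem together with the fact that the three summands of the local product decomposition correspond to pairwise disjoint coordinate subspaces in the matrix space endowed with the maximum norm, so the differential of the inverse factorization map at the identity has operator norm at most one.
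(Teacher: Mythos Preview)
Your proof is correct and follows the same strategy as the paper: use the local product structure $G^+\times G^-\times C\to G$ near the identity, move $x_-$ along the unstable factor to define $y$, and then read off the three estimates from the contraction/expansion rates of $a$-conjugation on the three factors. The only difference is implementation: the paper writes $x_-=x_+g$, solves explicitly for $u^+$ via $u_i=-g_{(d+1)i}/g_{(d+1)(d+1)}$, and then reads off $c$ and $u^-$ as the block-diagonal and strictly upper-triangular parts of $g'=gu^+$, obtaining the bounds $\|u^+-1\|<2c_0\epsilon$, $d(g',1)<3c_0\epsilon$, $\|u^--1\|<6c_0^2\epsilon$ directly from the formulas; you instead invoke the inverse function theorem and absorb the second-order BCH error into a factor of two. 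The explicit route has the small advantage that it verifies the stated range $\epsilon<\eta_0/(3c_0)$ on the nose (your $(1+O(\epsilon))$ bound tacitly needs $\eta_0$ small enough that the quadratic correction is dominated, which is not guaranteed by the definition of $\eta_0$ in \S\ref{sec:Operatornorms} alone), but this is a cosmetic point and easily fixed by shrinking $\eta_0$.
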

\begin{proof}
We have $x_{-}=x_{+}g$ for some $g=(g_{ij}) \in \SL(d+1,\R)$ with $d(g,1)<\epsilon$. Consider 
$$u^+= \left( \begin{array}{ccccc}  1&0 &...&0&0 \\
 0&1 &...&0&0\\
\vdots&\vdots&&\vdots&\vdots\\
0&0 &...&1&0\\
u_1 &u_2 &...&u_d&1  \end{array} \right)$$
 and let $y=x_{-}u^+$. For $\|(u_1,u_2,\dots,u_d)\|<2c_0\epsilon$ we have
\begin{align*}
d(\T^l(y),\T^l(x_{-}))&=d(x_{-} u^+a^l,x_{-}a^l)\\&=d(x_{-}a^l a^{-l}u^+a^l,x_{-}a^l)\\
& \leq d\left(\left( \begin{array}{cccccc}  1&0 &...&&0&0 \\
 0&1 &...&&0&0\\
\vdots&\vdots&&&\vdots&\vdots\\
0&0 &...&&1&0\\
u_1e^{l(d+1)/d} &u_2e^{l(d+1)/d} &...&&u_de^{l(d+1)/d}&1  \end{array} \right),1\right)\\
&<\|(u_1,u_2,\dots,u_d)\|e^{l(d+1)/d}<2c_0 \epsilon e^{l(d+1)/d}.
\end{align*}
This establishes part $(i)$. Now, we let

 $g':=gu^+$
 $$=\left( \begin{array}{cccccc}   g_{11}+g_{1(d+1)}u_1&...&&g_{1d}+g_{1(d+1)}u_d&g_{1(d+1)} \\
 g_{21}+g_{2(d+1)}u_1&...&&g_{2d}+g_{2(d+1)}u_d&g_{2(d+1)}\\
\vdots&&&\vdots&\vdots\\
.&...&&.&.\\
 g_{(d+1)1}+g_{(d+1)(d+1)}u_1&...&&g_{(d+1)d}+g_{(d+1)(d+1)}u_d&g_{(d+1)(d+1)}  \end{array} \right).$$
 Since $d(g,1)<\epsilon$, from \eqref{eqn:metric} we have that 
 $$|g_{(d+1)(d+1)}-1| \le \|g-1\|<c_0d(g,1)<1/2.$$
  In particular, $g_{(d+1)(d+1)} \neq 0$. Letting $u_i=-\frac{g_{(d+1)i}}{g_{(d+1)(d+1)}}$ for $i=1,2,...,d$ we can make sure that the unstable part with respect to $a$ is $0$. For any $i \in [1,d]$ we have $|g_{(d+1)i}| \le  \|g-1\| < c_0 \epsilon$. Hence, we have 
 $$||(u_1,u_2,\dots,u_d)||=\frac{1}{|g_{(d+1)(d+1)}|}\max_i \{|g_{(d+1)i}|\}< \frac{c_0\epsilon}{1/2}=2c_0\epsilon.$$
  Now,
  $$d(\T^l(y),\T^l(x_{+}))=d(\T^l(x_{+}gu^+),\T^l(x_{+}))= d(x_{+}a^la^{-l}g'a^l,x_{+}a^l)\leq d(a^{-l}g'a^l,1).$$
  Since unstable part of $g'$ is $0$, for $l\geq 0$ we obtain
  $$d(\T^l(y),\T^l(x_{+}))\leq d(g',1)=d(g u^+,1) \le d(u^+,1)+d(1,g)<  \|u^+\|+ \epsilon < 3c_0\epsilon.$$
  For the last part, let $$c=\left( \begin{array}{cccccc}   g_{11}+g_{1(d+1)}u_1&...&&g_{1d}+g_{1(d+1)}u_d&0 \\
 g_{21}+g_{2(d+1)}u_1&...&&g_{2d}+g_{2(d+1)}u_d&0\\
\vdots&&&\vdots&\vdots\\
.&...&&.&0\\
 0&...&&0&g_{(d+1)(d+1)}  \end{array} \right),$$
 then we have that $c \in C$ with $d(c,1)\le d(g',1)<3c_0\epsilon$, and hence $d(c^{-1},1)<3c_0\epsilon.$ On the other hand, if we let $u^-=c^{-1}g'$ then, $u^- \in U^-$ and
 $$\|u^--1\| <c_0 d(u^-,1) \le  c_0 d(g',1)+c_0d(1,c) < 6c_0^2 \epsilon.$$
 Thus, $d(\T^l(y),\T^l(x_{+}c))=d(x_+gu^+a^l,x_{+}ca^l)=d(x_+g'a^l,x_{+}ca^l)\leq d(g'a^l,ca^l)= d(a^{-l}c^{-1}g'a^l,1)=d(a^{-l}u^-a^l,1)<\|u^--1\| e^{-l(d+1)/d} < 6c_0^2\epsilon e^{-l(d+1)/d} $.
 \end{proof}
 \section{Construction}
 \label{sec:construction}
In this section we construct the set $S_N$ mentioned in the introduction with the properties as in Theorem \ref{thm:main}. Repeatedly using both the shadowing lemma and $K$ lattices constructed in the previous section we obtain more and more lattices that in the limit gives the set $S_N$. 

Recall the set $S_N'(1)$ constructed in \S~\ref{sec:initialstep} (see \eqref{eqn:S_N'(1)}). Let $M'>0$ be a height that depends on $N$ such that for any $x_i \in S_N'(1)$ and for any $l=0,1,...,N$ we have $\T^l(x_i) \in X_{<M'}$. Recall that $\delta>0$ is an injectivity radius for $X_{<17M}$ with $\delta<\min\{\frac{1}{8M},\eta_0\}.$ Now, let $\eta\in (0,\delta)$ be such that $2\eta$ is an injectivity radius of $X_{<M'}$. Recall that $K=\lfloor \frac{1}{13} e^{dN} \rfloor$. We will prove Theorem~\ref{thm:main} with choice of $s=\eta/e^2$ and with the choice of $M'$ as defined above.
 
 Theorem~\ref{thm:main} follows from the following proposition. 
\begin{prop}
\label{prop:main}
As before, let $N$ be sufficiently large. For any positive integer $m$, there is a subset $$S_N'(m)=\left\{x_{i_1i_2...i_m}:i_1,i_2,...,i_m\in\{1,2,...,K\}\right\}$$ of $X_{<M}$ with the following properties:
\begin{enumerate}
\item for any $x\in S_N'(m)$ we have 
$$|\{l\in [0,m N+ (m-1)N']:\T^l(x)\in X_{\ge M/(c_0+1)}\}|\ge m N, $$
\item for any $x \in S_N'(m)$ we have $\T^{mN+(m-1)N'}(x) \in X_{<17M}$,
\item for any distinct $x_{i_1i_2...i_m},x_{j_1j_2...j_m}\in S_N'(m)$, say $i_n \neq j_n$, there exist $g,h \in G$ such that 
$$\T^{(n-1)(N+N')}(x_{i_1i_2...i_m})=\Gamma g\text{ and } \T^{(n-1)(N+N')}(x_{j_1j_2...j_m})=\Gamma h$$
 with $d(\Gamma g, \Gamma h)=d(g,h)$  and that
    \begin{align*}
    d(\T^N(g),\T^N(h))&>\delta -\frac{\delta}{3^4} \text{ if }  n=m \text{ and }\\
    d(\T^N(g),\T^N(h))&>\delta -\delta \sum_{l=3}^{m-n+2} 3^{-l} \text{ if } n\in[1,m).
    \end{align*}
\end{enumerate}
Moreover, we can make sure that for $x_{i_1i_2...i_m}\in S_N'(m)$ and for $x_{i_1i_2...i_{m+1}}\in S_N'(m+1)$ we have $d(x_{i_1i_2...i_m},x_{i_1i_2...i_{m+1}})<\delta e^{-m}.$
\end{prop}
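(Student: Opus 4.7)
The argument is by induction on $m$. For the base case $m = 1$, take $S_N'(1)$ to be the set constructed in Section~\ref{sec:initialstep}: property (ii) holds since $X_{<16M} \subset X_{<17M}$, property (iii) follows from Proposition~\ref{prop:sep}(iii) together with $\delta < 1/(8M)$, and for property (i) the bound $\h(\T^l(x_i)) \ge M$ on $[1,N]$ from Proposition~\ref{prop:sep}(i) already gives $N$ good times in $[0,N]$.

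For the inductive step, assume $S_N'(m)$ has been built. For each new index $i_{m+1}$, I construct $x_{i_1\cdots i_{m+1}}$ as a small perturbation of $x_{i_1\cdots i_m}$ whose orbit shadows the old one for the first $mN + (m-1)N'$ iterates, transitions over the next $N'$ iterates, and then tracks the orbit of $x_{i_{m+1}}$ for the final $N$ iterates. Concretely, set $p := \T^{mN+(m-1)N'}(x_{i_1\cdots i_m}) \in X_{<17M}$ and $q := x_{i_{m+1}} \in X_{<M}$. Apply Lemma~\ref{lem:join} to obtain $z \in X_{<17M}$ with $d(z,p) < \delta/(c_0^3 3^9)$ and $d(\T^{N'}(z),q) < \delta/(c_0^3 3^9)$, then apply the Shadowing Lemma~\ref{lem:shadow} with $x_- = p$, $x_+ = z$, $\epsilon = \delta/(c_0^3 3^9)$ to obtain a point $y$, and set $x_{i_1\cdots i_{m+1}} := \T^{-(mN+(m-1)N')}(y)$.

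The verification proceeds as follows. The closeness bound $d(x_{i_1\cdots i_m}, x_{i_1\cdots i_{m+1}}) < \delta e^{-m}$ is the backward shadowing estimate $d(\T^l(y),\T^l(p)) < 2c_0\epsilon\,e^{l(d+1)/d}$ at $l = -(mN+(m-1)N')$, which is far below $\delta e^{-m}$ once $N$ is large; the same estimate keeps the new orbit within $2c_0\epsilon < \delta$ of the orbit of $x_{i_1\cdots i_m}$ throughout $[0, mN+(m-1)N']$. Combined with the inductive description of $x_{i_1\cdots i_m}$, the orbit of $x_{i_1\cdots i_{m+1}}$ then tracks $x_{i_k}$ during each good window $[(k-1)(N+N'),(k-1)(N+N')+N]$ for $k=1,\dots,m+1$, uniformly within the injectivity radius $\delta$ of $X_{<17M}$. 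Since $\h(\T^l(x_{i_k})) \ge M$ on these windows, the operator-norm inequality in Section~\ref{sec:Operatornorms} yields $\h \ge M/(c_0+1)$ for the shadow, so only the $N'$-gaps contribute to the bad set, giving property (i). Property (ii) follows because at time $(m+1)N+mN'$ the orbit lies within $3c_0\epsilon$ of $\T^N(\T^{N'}(z))$, hence within $\delta$ of $\T^N(q) \in X_{<16M}$, using either the forward bound or the ``moreover'' clause of Lemma~\ref{lem:shadow}. For property (iii), let $n$ be the first index where two tuples differ: if $n=m+1$, the separation $d(\T^N(x_{i_{m+1}}),\T^N(x_{j_{m+1}})) \ge 1/(8M) > \delta$ from Proposition~\ref{prop:sep}(iii) is degraded only by the cumulative tracking error $\le \delta \sum_{l\ge 3} 3^{-l}$; if $n<m+1$, the inductive bound at block $n$ is degraded only by the fresh stage-$(m+1)$ perturbation, which backward shadowing controls by $2c_0\epsilon\,e^{-((m-n+1)N+(m-n)N')(d+1)/d}$, a quantity negligible compared with $\delta\cdot 3^{-(m+3-n)}$.

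The main obstacle is the bookkeeping in (iii): one must verify that at each block boundary $n$, the perturbations accumulated over stages $n+1, n+2, \dots$ sum to at most $\delta \sum_{l=3}^{m-n+2} 3^{-l}$. This is exactly why the tiny $\epsilon = \delta/(c_0^3 3^9)$ was chosen and why the exponential backward-shadowing factor is crucial: each later stage contributes a correction dominated by $3^{-1}$ times the previous one, producing the geometric series. A secondary subtlety, responsible for the factor $c_0+1$ in property (i) and the $\delta e^{-m}$ decay in the last assertion, is that the shadow must stay uniformly within the injectivity radius $\delta$ of $X_{<17M}$ in every good window, which forces the initial perturbation at each stage to be exponentially small in $m$.
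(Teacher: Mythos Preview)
Your construction has a genuine gap: you apply the shadowing lemma only once, with $x_-=p$ and $x_+=z$, and then assert that the resulting orbit tracks $x_{i_{m+1}}$ on the final window. It does not. Lemma~\ref{lem:shadow}(ii) gives $d(\T^l(y),\T^l(z))<3c_0\epsilon$ for $l\ge 0$, so at time $(m+1)N+mN'$ the new orbit is close to $\T^{N+N'}(z)$. You then write ``hence within $\delta$ of $\T^N(q)$'', but this step is unsupported: all you know is $d(\T^{N'}(z),q)<\delta/(c_0^33^9)$, and two points that are $\epsilon$-close need not have $\epsilon$-close images under $\T^N$ --- the unstable direction expands by $e^{N(d+1)/d}$. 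The ``moreover'' clause of Lemma~\ref{lem:shadow} does not help either, since there $x_+=z$, so the exponential decay is toward the orbit of $zc$, not toward the orbit of $q$. Consequently property~(ii) fails; property~(i) fails on the block $k=m+1$ because there is no reason for $\T^{N'+l}(z)$ to have height $\ge M$; and the $n=m+1$ case of property~(iii) fails because you cannot compare $\T^N$ of the new points to $\T^N(g_{i_{m+1}}),\T^N(g_{j_{m+1}})$.

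The paper repairs exactly this point by applying the shadowing lemma a \emph{second} time. After obtaining $y$ as you do, one observes $d(\T^{N'}(y),x_j)<\delta/(c_0^23^7)$ and applies Lemma~\ref{lem:shadow} again with $x_-=\T^{N'}(y)$, $x_+=x_j$, obtaining $y'$; then $x_{i_1\cdots i_kj}:=\T^{-k(N+N')}(y')$. Now the forward estimate and the ``moreover'' clause genuinely control the distance to the orbit of $x_j$ (and to $x_jc_j$ with exponential decay), which is what makes (i), (ii), and the $n=m+1$ case of (iii) go through. Your bookkeeping for $n\le m$ is along the right lines, but it too must be redone with the two-step perturbation $y\mapsto y'$, combining the backward bounds \eqref{eqn:yNegl} and \eqref{eqn:y'Negl} via the triangle inequality before feeding them into the inductive hypothesis.
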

To derive Theorem~\ref{thm:main} from Proposition~\ref{prop:main} we need the lemma below which helps us to determine when two lattices get separated.
\begin{lem}
 \label{lem:group}
 For $\Gamma g, \Gamma h \in X$ with $\T^l(\Gamma g),\T^l(\Gamma h) \in X_{<M'}$ in $[0,N]$ assume that $d(g,h)<\frac{\eta}{e^{2}}$ and $d(\T^{N}(g),\T^{N}(h))\ge\frac{\eta}{e^{2}}$. Then $\Gamma g, \Gamma h$ is $(N,\frac{\eta}{e^{2}})$-separated, that is, there exists $l\in [1, N]$ with $d(\T^l(\Gamma g),\T^l(\Gamma h))\ge \frac{\eta}{e^{2}}.$
 \end{lem}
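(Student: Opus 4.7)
The plan is to argue by contradiction: suppose $d(\T^l \Gamma g, \T^l \Gamma h) < \eta/e^2$ for every $l \in [1,N]$. Together with the hypothesis $d_G(g,h) < \eta/e^2$ at $l=0$, the two lattices stay at $d_X$-distance less than $\eta/e^2 < 2\eta$ for all $l \in [0,N]$. Since both orbits lie in $X_{<M'}$, where $2\eta$ is an injectivity radius, the lift-tracking property gives for each $l$ a unique $\gamma_l \in \Gamma$ with
$d_G(g a^l, \gamma_l h a^l) = d_X(\T^l \Gamma g, \T^l \Gamma h) < \eta/e^2$.
The strategy is to show by induction on $l$ that $\gamma_l = 1$ for every $l \in [0,N]$, and then extract the contradiction from $\gamma_N = 1$.

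The base case $\gamma_0 = 1$ is immediate from $d_G(g,h) < \eta/e^2$ together with uniqueness of the lift in $B_{2\eta}^G$. For the induction step, assume $\gamma_l = 1$, so that $d_G(g a^l, h a^l) < \eta/e^2$. Setting $\xi = a^{-l} g^{-1} h a^l$, left-invariance gives $d_G(1,\xi) < \eta/e^2$, whence $\|1-\xi\| < c_0 \eta/e^2$ by \eqref{eqn:metric}. The key estimate is that conjugation by $a$ expands the max norm by at most $e^{(d+1)/d} \leq e^2$, the largest factor $e^{\mu_j - \mu_i}$ among diagonal entries of $a$ (attained with $j \leq d$ and $i = d+1$). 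Thus $\|1 - a^{-1}\xi a\| = \|a^{-1}(\xi - 1)a\| \leq e^2\|1-\xi\| < c_0\eta$, and applying \eqref{eqn:metric} once more yields $d_G(g a^{l+1}, h a^{l+1}) < c_0\eta$. After rescaling the Riemannian metric if necessary so that $c_0 < 2$, this falls inside the injectivity ball $B_{2\eta}^G$, and uniqueness of the lift forces $\gamma_{l+1} = 1$; the contradiction hypothesis then upgrades the bound to $d_G(g a^{l+1}, h a^{l+1}) = d_X(\T^{l+1}\Gamma g, \T^{l+1}\Gamma h) < \eta/e^2$, closing the induction.

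Applying the conclusion at $l = N$ gives $d_G(g a^N, h a^N) < \eta/e^2$, contradicting the hypothesis $d(\T^N(g), \T^N(h)) \geq \eta/e^2$. The delicate point is the induction step: the factor $e^2$ in the threshold $\eta/e^2$ is calibrated precisely to absorb one step of conjugation expansion $e^{(d+1)/d}$ (including the comparison constant $c_0$), so that the tracked representative stays inside the injectivity ball $B_{2\eta}^G$, letting us re-identify the lift as the identity at the next time step and reset the stronger bound $d_G < \eta/e^2$. The hypothesis that the orbit remains in $X_{<M'}$ is used only to guarantee a uniform injectivity radius $2\eta$ along which this lift-tracking can be carried out.
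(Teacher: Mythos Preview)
Your argument is correct and shares the paper's core mechanism: a one-step expansion bound under conjugation by $a$, combined with the injectivity radius $2\eta$ on $X_{<M'}$ to pass between $d_G$ and $d_X$. The packaging differs. You run a contradiction argument, tracking the lift $\gamma_l$ and showing inductively that it stays equal to the identity; the paper argues directly by locating the \emph{first} time $l$ at which $d_G(\T^l g,\T^l h)\ge\eta/e^2$, bounding $d_G$ at that instant by $2e^2\cdot d_G(\T^{l-1}g,\T^{l-1}h)<2\eta$ via a decomposition $h^{-1}g=u^+u^-c$ and triangle inequality in the Riemannian metric, and then invoking injectivity once to get $d_X=d_G\ge\eta/e^2$.

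One technical point: your route through the matrix norm via \eqref{eqn:metric} costs you a factor $c_0$, forcing the extra hypothesis $c_0<2$. This is harmless in principle (one can shrink $\eta_0$ so that the local comparison constant is as close to $1$ as desired), but it is an assumption not made elsewhere in the paper, and all the other constants $(\delta,\eta,N',\dots)$ have already been fixed relative to the given metric; strictly speaking the rescaling should be declared at the outset, not mid-proof. The paper sidesteps this by estimating $d_G(a^{-1}(\cdot)a,1)$ directly through the $u^+u^-c$ decomposition rather than passing through $\|\cdot\|$, obtaining the clean factor $2e^2$ without $c_0$. If you want to avoid the extra assumption, you can replace your norm estimate by that decomposition step; otherwise the argument is sound.
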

 \begin{proof}
 Since we have $d(g,h)<\frac{\eta}{e^{2}}$ and that $d(\T^{N}(g),\T^{N}(h))>\frac{\eta}{e^{2}}$, there exists $l \in [1,N]$ such that 
 $$d(\T^{l-1}(g),\T^{l-1}(h)) < \frac{\eta}{e^{2}} \le d(T^l(g),T^l(h)).$$
We have $d(\T(g),\T(h))=d(a^{-1}h^{-1}g a,1)=d(a^{-1}u^+a a^{-1}u^-c a,1)$. On the other hand, we note that any two elements of the unstable subgroup with respect to $a$ gets expanded at most by the factor of $e^{(d+1)/d}$ under the action of $\T$. Together with triangle inequality we have 
\begin{align*}
d(a^{-1}u^+a a^{-1}u^-c a,1)&\le d(a^{-1}u^+a a^{-1}u^-c a,a^{-1}u^+a)+d(a^{-1}u^+a,1)\\
&=d(a^{-1}u^-c a,1)+d(a^{-1}u^+a,1)\\
&\le d(u^-c,1)+e^{(d+1)/d}d(u^+,1)\\
&\le e^{2}(d(u^-c,1)+d(u^+,1))\\
&\le 2e^{2}d(u^+u^-c,1).
\end{align*}
Thus, $d(\T^l(g),\T^l(h)) \le 2e^{2}d(\T^{l-1}(g),\T^{l-1}(h))<2\eta$. On the other hand, $\T^l(\Gamma g)$, $\T^l(\Gamma h)$ are in $X_{<{M'}}$ and $2\eta$ is an injectivity radius of $X_{<{M'}}$. Hence, $$d(\T^l(\Gamma g),\T^l(\Gamma h))=d(\T^l(g),T^l(h)) \ge \frac{\eta}{e^{2}}. $$ 
\end{proof}
\begin{proof}[Proof of Theorem~\ref{thm:main}]
For any $m$ let us pick a set 
$$S_N'(m)=\left\{x_{i_1i_2...i_m}:i_1,i_2,...,i_m\in\{1,2,...,K\}\right\}$$
 as in Proposition~\ref{prop:main}. Also, assume for $x_{i_1i_2...i_m}\in S_N'(m)$ and for $x_{i_1i_2...i_{m+1}}\in S_N'(m+1)$ we have $d(x_{i_1i_2...i_m},x_{i_1i_2...i_{m+1}})<\delta e^{-m}.$ If we fix a sequence $\{i_l\} \subset \{1,2,...,K\}^{\N}$, then the sequence $\{x_{i_1},x_{i_1i_2},x_{i_1i_2i_3},...\}$ becomes a Cauchy sequence and hence converges. So, we let 
 $x_{\{i_l\}}=\lim_{n \to \infty}x_{i_1i_2...i_m}.$ 
 Varying the sequence $\{i_l\}$ we define the set 
 $$S_N=\left\lbrace x_{\{i_l\}}:\{i_l\} \subset \{1,2,...,K\}^{\N}\right\rbrace .$$
 Also, define subsets $S_N(m)$'s of $S_N$ 
 $$S_N(m)=\left\lbrace x_{\{i_l\}}:\{i_l\} \subset \{1,2,...,K\}^{\N} \text{ with } i_l=1 \text{ for all } l>m \right\rbrace .$$
 By definition of $S_N(m)$ and by $(i)$ of Proposition~\ref{prop:main}, for any $x_{\{i_l\}}\in S_N(m)$ we have 
 $$|\{l\in [0,m N+ (m-1)N']:\T^l(\{x_I\})\in X_{\ge M/(c_0+1)}\}|\ge m N.$$
As for part $(ii)$, again from the construction of the set $S_N(m)$ and from $(iii)$ of Proposition \ref{prop:main} we conclude that for any distinct $x_{\{i_l\}},x_{\{j_l\}} \in S_N(m)$, say $i_n \neq j_n$, there exist $g,h \in G$ with $\T^{(n-1)(N+N')}(x_{\{i_l\}})=\Gamma g,\T^{(n-1)(N+N')}(x_{\{j_l\}})=\Gamma h$ and $d(\Gamma g, \Gamma h)=d(g,h)$ such that
 $$d(\T^N(g),\T^N(h))>\delta -\delta \sum_{l=3}^{\infty} 3^{-l}=\frac{17}{18}\delta.$$
If $d(\Gamma g, \Gamma h) \ge \frac{\eta}{e^2}$ then there is nothing to show, if not then from Lemma~\ref{lem:group} for some $s\in [1,N]$ we conclude that $d(\T^s(\Gamma g),\T^s(\Gamma h))\ge \frac{\eta}{e^2}$ since $\frac{\eta}{e^2} < \frac{17}{18}\delta$. Thus, for some $s\in [1,N]$ we have
$$d\left( \T^{(n-1)(N+N')+s}(x_{\{i_l\}}),\T^{(n-1)(N+N')+s}(x_{\{j_l\}})\right) \ge \frac{\eta}{e^2}$$  
and hence the set $S_N(m)$ is $(m N+ (m-1)N',\eta/e^{2})$-separated since $n \le m$. This concludes the proof.
\end{proof}
Now, we will make use of what we obtained in the previous section to prove Proposition~\ref{prop:main}.
\begin{proof}[Proof of Proposition~\ref{prop:main}]
We inductively prove $(ii)$ and $(iii)$ and briefly discuss how these arguments imply $(i)$. Let us fix some large $N$.

For $m=1$ let $S_N'(1)=\{x_1,x_2,...,x_{K}\}$ be the set as in Proposition \ref{prop:sep}. It is clear that $(i)$ and $(ii)$ are satisfied. Let $x_i=\Gamma g_i, x_j=\Gamma g_j$ be distinct elements (cf. \eqref{eqn:S_N'(1)}). Then letting $g=g_i$ and $h=g_j$ we obtain $(iii)$ since the part $(iii)$ of Proposition~\ref{prop:sep} gives
$$d(\T^N(g_i),\T^N(g_j))\ge \frac{1}{8M}>\delta.$$
Now, assume that the proposition holds for $m=k \ge 1$, we have the set $S_N'(k)=\{x_{i_1i_2...i_k}:i_1,i_2,...,i_k=1,...,K\}$. Let us construct the set $S_N'(k+1)$.

For any $x_{i_1i_2...i_k} \in S_N'(k)$, we have $\T^{kN+(k-1)N'}(x_{i_1i_2...i_k})\in X_{<17M}$. Hence, applying Lemma~\ref{lem:join} we have that for $x_j$ there exists $z$ with
$$d(\T^{kN+(k-1)N'}(x_{i_1i_2...i_k}),z)<\delta/(c_0^3 3^9) \text{ and } d(x_j,\T^{N'}(z))<\delta/(c_0^3 3^9).$$
Now, we apply shadowing lemma with $x_{-}=\T^{kN+(k-1)N'}(x_{i_1i_2...i_k})$ and $x_{+}=z$ and $\epsilon=\delta/(c_0^3 3^9)$. There exists $y$ such that
\begin{align}
\label{eqn:yNegl}
d(\T^l(y),\T^l(T^{kN+(k-1)N'}(x_{i_1i_2...i_k})))&< \frac{\delta}{c_0^2 3^8} e^{l(d+1)/d} \text{ for } l\leq 0 \text{ and}\\
d(\T^l(y),\T^l(z))&< \frac{\delta}{c_0^2 3^8}  \text{ for }  l \geq 0.
\end{align}
We have $d(x_j,\T^{N'}(y))<d(x_j,\T^{N'}(z))+d(\T^{N'}(z),\T^{N'}(y))<\delta/(c_0^4 3^9)+\delta/(c_0^2 3^8)<\delta/(c_0^2 3^7).$ We apply shadowing lemma once more with $x_{-}=\T^{N'}(y)$ and $x_{+}=x_j$ and $\epsilon=\delta/(c_0^2 3^7)$. There exists $y'$ such that
\begin{align}
\label{eqn:y'Negl}
d(\T^l(y'),\T^l(\T^{N'}(y)))&< \frac{\delta}{c_0 3^6} e^{l(d+1)/d} \text{ for } l\leq 0 \text{ and }\\
\label{eqn:Posl}
d(\T^l(y'),\T^l(x_j))&< \frac{\delta}{c_0 3^6} \text{ for } l \ge 0
\end{align}
Also, there exists $c_j\in C$ with $d(c_j,1)<\frac{\delta}{c_03^6}$ such that
\begin{equation}
\label{eqn:constantc_j}
d(\T^l(y'),\T^l(x_jc_j))< \frac{\delta}{3^5} e^{-l(d+1)/d}  \text{ for }  l \geq 0 
\end{equation}
Now we let $x_{i_1i_2...i_kj}=\T^{-k(N+N')}(y')$ and varying $j$ we obtain the set 
$$S_N'(k+1)=\{x_{i_1i_2...i_kj} : j \in \{1,2,...,K\}\}.$$
Let us justify part $(ii)$ first.  Let us fix some $j=1,2,...,K.$ Recalling that $x_{i_1i_2...i_kj}=\T^{-k(N+N')}(y')$ we obtain from \eqref{eqn:Posl} with $l=N$ that
$$d(\T^{(k+1)N+kN'}(x_{i_1i_2...i_k j}),\T^N(x_j))< \frac{\delta}{c_0 3^6}.$$
Moreover, from Proposition~\ref{prop:sep} we have $\T^N(x_j) \in X_{<16M}$ so that
$$\h(\T^{(k+1)N+kN'}(x_{i_1i_2...i_k j}) )\le \frac{\h(\T^N(x_j))}{1-\frac{\delta}{ 3^6}}<17M.$$

To prove $(iii)$ let us consider any distinct pairs $x_{i_1i_2...i_ki_{k+1}}$ and $x_{j_1j_2...j_kj_{k+1}}$ in $S_N'(k+1).$ First, assume that $i_{k+1}\neq j_{k+1}$ and let $g,h \in G$ be such that 
$$\T^{k(N+N')}(x_{i_1i_2...i_ki_{k+1}})=\Gamma g,\,\, \T^{k(N+N')}(x_{j_1j_2...j_kj_{k+1}})=\Gamma h$$ 
with
\begin{multline}
\label{eqn:c_{i_{k+1}}}
d(\T^{k(N+N')+N}(x_{i_1i_2...i_ki_{k+1}}c_{i_{k+1}}),\T^N(x_{i_{k+1}}))\\=d(\T^N(gc_{i_{k+1}}),\T^N(g_{i_{k+1}}))
<\frac{\delta}{3^5}e^{-N(d+1)/d} \text{ and }
\end{multline}
\begin{multline}
\label{eqn:c_{j_{k+1}}}
d(\T^{k(N+N')+N}(x_{j_1j_2...j_kj_{k+1}}c_{j_{k+1}}),\T^N(x_{j_{k+1}}))\\=d(\T^N(hc_{j_{k+1}}),\T^N(g_{j_{k+1}}))
<\frac{\delta}{3^5}e^{-N(d+1)/d}
\end{multline}
for some $c_{i_{k+1}},c_{j_{k+1}}\in C$ with $d(c_{i_{k+1}},1)<\frac{\delta}{c_0 3^6}$ and $d(c_{j_{k+1}},1)<\frac{\delta}{c_0 3^6}$ as in \eqref{eqn:constantc_j}. Thus, we have
$$d(g_{i_{k+1}},gc_{i_{k+1}})<\frac{\delta}{3^5} \text{ and } d(g_{j_{k+1}},hc_{j_{k+1}})<\frac{\delta}{3^5}.$$
We also note from Proposition~\ref{prop:sep} that $d(g_{i_{k+1}},g_{j_{k+1}})<\frac{30}{16}e^{-N/d}.$ Thus, for $N$ large enough we get
\begin{align*}
&d(g,h)\\
&<d(g,g c_{i_{k+1}})+d(g c_{i_{k+1}},g_{i_{k+1}})+d(g_{i_{k+1}},g_{j_{k+1}})+d(g_{j_{k+1}},hc_{j_{k+1}})+d(h c_{j_{k+1}},h)\\
&<\frac{\delta}{3^6}+\frac{\delta}{3^5}+\frac{30}{16}e^{-N/d}+\frac{\delta}{3^5}+\frac{\delta}{3^6}\\
&<\frac{\delta}{3^4}.
\end{align*}
In particular, $d(\Gamma g,\Gamma h)=d(g,h)$ since $\delta$ is an injectivity radius for $X_{<17M}$.
On the other hand, from Proposition~\ref{prop:sep} we know that 
$$d(\T^N(g_{i_{k+1}}),\T^N(g_{j_{k+1}}))>\frac{1}{8M}>\delta.$$
 So, together with \eqref{eqn:c_{i_{k+1}}} and \eqref{eqn:c_{j_{k+1}}} we conclude that
\begin{align*}
&d(\T^N(g),\T^N(h))\\
&>d(\T^N(g_{i_{k+1}}),\T^N(g_{j_{k+1}}))-d(\T^N(g_{i_{k+1}}),\T^N(g))-d(\T^N(g_{j_{k+1}}),\T^N(h))\\
&>\delta-\frac{\delta}{3^5}e^{-N(d+1)/d}-\frac{\delta}{c_0 3^6}-\frac{\delta}{3^5}e^{-N(d+1)/d}-\frac{\delta}{c_0 3^6}\\
&>\delta -\frac{\delta}{3^4}.
\end{align*}
Now, assume that $i_n\neq j_n$ for some $n\le k$. By replacing $l$ in \eqref{eqn:yNegl} by $l-(k-n)(N+N')$ we obtain
\begin{multline}
\label{eqn:yNegl2}
d(\T^{l-(k-n)(N+N')}(y),\T^{l+n(N+N')-N'}(x_{i_1i_2...i_k}))\\
< \frac{\delta}{c_0^23^8} e^{(l-(k-n)(N+N'))(d+1)/d}  \text{ for } l \le 0.
\end{multline}
On the other hand, if we replace $l$ in \eqref{eqn:y'Negl} by $l-(k-n)(N+N')-N'$ we get
\begin{multline}
\label{eqn:y'Negl2}
d(\T^{l-(k-n)(N+N')-N'}(y'),\T^{l-(k-n)(N+N')}(y))\\
< \frac{\delta}{c_0 3^6} e^{(l-(k-n)(N+N')-N')(d+1)/d} \text{ for } l\leq 0.
\end{multline}
Thus, \eqref{eqn:yNegl2} and \eqref{eqn:y'Negl2} together with the triangular inequality give
\begin{multline*}
d(\T^{l-(k-n)(N+N')-N'}(y'),\T^{l+n(N+N')-N'}(x_{i_1i_2...i_k}))\\
< \frac{\delta}{c_0 3^5} e^{(l-(k-n)(N+N')-N')(d+1)/d} 
\end{multline*}
for $l\le 0$ where $y'=\T^{-k(N+N')}(x_{i_1i_2...i_k j})$ for $j=1,2,...,K$. Thus, we have
\begin{multline}
\label{eqn:ii}
d(\T^{n(N+N')-N'+l}(x_{i_1i_2...i_k}),\T^{n(N+N')-N'+l}(x_{i_1i_2...i_{k+1}}))\\
<\frac{\delta}{c_0 3^5}e^{(l-(k-n)(N+N'))(d+1)/d}
\end{multline}
and
\begin{multline}
\label{eqn:jj}
d(\T^{n(N+N')-N'+l}(x_{j_1j_2...j_k}),\T^{n(N+N')-N'+l}(x_{j_1i_2...j_{k+1}}))\\
<\frac{\delta}{c_0 3^5}e^{(l-(k-n)(N+N'))(d+1)/d}.
\end{multline}
  Now, from the induction hypothesis we have that there are $g',h'$ with
$$\T^{n(N+N')}(x_{i_1i_2...i_k})=\Gamma g',\,\,\, \T^{n(N+N')}(x_{j_1j_2...j_k})=\Gamma h'$$
 such that $d(\Gamma g',\Gamma h')=d(g',h')$ and that
\begin{align*}
    d(\T^N(g'),\T^N(h'))&>\delta -\frac{\delta}{3^4} \text{ if }  n=k \text{ and }\\
    d(\T^N(g'),\T^N(h'))&>\delta -\delta \sum_{l=3}^{k-n+2} 3^{-l} \text{ if } n\in[1,k).
    \end{align*}
 Let $g,h \in G$ be such that
  $$\T^{(n-1)(N+N')}(x_{i_1i_2...i_{k+1}})=\Gamma g \text{ and } \T^{(n-1)(N+N')}(x_{j_1j_2...j_{k+1}})=\Gamma h$$
   with
  \begin{align*}
  d(g,g')&<\frac{\delta}{c_0 3^5}e^{[-(k-n)(N+N')-N](d+1)/d},\\
  d(h,h')&<\frac{\delta}{c_0 3^5}e^{[-(k-n)(N+N')-N](d+1)/d}.
  \end{align*}
   This can be done using \eqref{eqn:ii} and \eqref{eqn:jj} with $l=-N$. In particular,
   \begin{align*}
  d(\T^N(g),\T^N(g'))&<\frac{\delta}{c_0 3^5}e^{-(k-n)(N+N')(d+1)/d},\\
  d(\T^N(h),\T^N(h'))&<\frac{\delta}{c_0 3^5}e^{-(k-n)(N+N')(d+1)/d}.
  \end{align*}
  Also, since by construction 
  $$\T^{(n-1)(N+N')}(x_{i_1i_2...i_{k+1}}), \T^{(n-1)(N+N')}(x_{j_1j_2...j_{k+1}})\in X_{<17M}$$
  and since $\frac{\delta}{3^5}e^{[-(k-n)(N+N')-N](d+1)/d}$ is less than the injectivitiy radius $\delta$ for $X_{<17M}$ we have
     \begin{align*}
   d\left(\T^{(n-1)(N+N')}(x_{i_1i_2...i_{k+1}}),\T^{(n-1)(N+N')}(x_{i_1i_2...i_k})\right)&=d(g,g')\text{ and }\\
   d\left(\T^{(n-1)(N+N')}(x_{j_1j_2...j_{k+1}}),\T^{(n-1)(N+N')}(x_{j_1j_2...j_k})\right)&=d(h,h').
  \end{align*}
  Now, if $n=k$ then 
  \begin{align*}
  d(\T^N(g),\T^N(h))&\geq d(\T^N(g'),\T^N(h'))-d(\T^N(g'),\T^N(g))-d(\T^N(h'),\T^N(h))\\
  &> \delta-\frac{\delta}{3^4}-\frac{\delta}{c_0 3^5}-\frac{\delta}{c_0 3^5}\\
  &=\delta-\frac{\delta}{3^3}\\
  &=\delta-\delta \sum_{l=3}^{k+1-n+2} 3^{-l}.
  \end{align*}
  Otherwise, if $n<k$ then
\begin{align*}
 d(\T^N(g),\T^N(h)) &\geq d(\T^N(g'),\T^N(h'))-d(\T^N(g'),\T^N(g))-d(\T^N(h'),\T^N(h))\\
 &>\delta -\delta \sum_{l=3}^{k-n+2} 3^{-l}-2\frac{\delta}{c_0 3^5}e^{-(k-n)(N+N')(d+1)/d} \\
 &>\delta -\delta \sum_{l=3}^{k-n+2} 3^{-l}-\delta\cdot 3^{-(k-n+3)}\\
 &=\delta -\delta \sum_{l=3}^{k+1-n+2} 3^{-l}.
 \end{align*}
 This concludes the proof of $(iii)$ for $n=k+1$ and the inductive argument. 
 
 Now, we will briefly point out why $(i)$ holds. Clearly it is true for the elements of $S_N'(1)$ as suggested by Proposition~\ref{prop:sep}. In the inductive step, to estimate the distance between the elements of $S_N'(m)$ and $S_N'(m+1)$ under the action of $\T$ we made use of \eqref{eqn:c_{i_{k+1}}}, \eqref{eqn:c_{j_{k+1}}} , \eqref{eqn:ii}, and \eqref{eqn:jj} and obtained part $(iii)$. Arguing in the same way, we can inductively prove for any $m \ge 1$ and for any $x \in S_N'(m)$ that
 $$d(\T^{l+n(N+N')}(x), \T^l(x_j))<\delta \sum_{k=3}^{m-n+3} 3^{-k}$$
 for some $x_j \in S_N'(1)$ and for $l \in [n(N+N'),(n+1)N+nN')]$ with $n \le m$. In particular,
 $$d(\T^{l+n(N+N')}(x), \T^l(x_j))<\delta \sum_{k=3}^{\infty} 3^{-k}=\frac{\delta}{18}$$
 for some $x_j \in S_N'(1)$ and for $l \in [n(N+N'),(n+1)N+nN')]$. Together with $(i)$ of Proposition~\ref{prop:sep} we obtain
 $$\h(\T^{l+n(N+N')}(x)) \ge \frac{\h(\T^l(x_j))}{\frac{c_0 \delta}{18}+1}> \frac{M}{c_0+1}$$
 for $l \in [n(N+N'),(n+1)N+nN')]$. This justifies $(i)$.
 
 Finally, from \eqref{eqn:ii} with $n=1$ and $l=-N$ we have
 \[d(x_{i_1i_2...i_k},x_{i_1i_2...i_{k+1}})<\frac{\delta}{c_0 3^5}e^{(-N-(k-1)(N+N'))(d+1)/d}<\delta e^{-k}\]
which concludes the proof.
\end{proof}
\bibliographystyle{plain}
\bibliography{mybib}

\end{document}